\let\@@pmod\pmod
\DeclareRobustCommand{\pmod}{\@ifstar\@pmods\@@pmod}
\def\@pmods#1{\mkern4mu({\operator@font mod}\mkern 6mu#1)}
\numberwithin{equation}{section}
\newtheorem{Thm}{Theorem}[section]
\newtheorem{Prop}[Thm]{Proposition}
\newtheorem{Lem}[Thm]{Lemma}
\newcommand{\Z}{\mathbb{Z}}
\newcommand{\Q}{\mathbb{Q}}
\newcommand{\C}{\mathbb{C}}
\newcommand{\A}{\mathbb{A}}
\newcommand{\R}{\mathbb{R}}
\renewcommand{\H}{\mathbb{H}}
\newcommand{\w}{\omega}
\DeclareMathOperator{\F}{{}_2F_1}
\DeclareMathOperator{\GL}{GL}
\DeclareMathOperator{\sgn}{sgn}
\begin{document}
\title[Simple zeros of automorphic $L$-functions]{Simple zeros of automorphic $L$-functions}
\author{Andrew R. Booker}
\thanks{A.~R.~Booker was partially supported by EPSRC Grant
\texttt{EP/K034383/1}. No data were created in the course of this study.}
\address{School of Mathematics, University of Bristol, University Walk, Bristol, BS8 1TW, United Kingdom}
\email{andrew.booker@bristol.ac.uk}

\author[Peter Cho]{Peter J. Cho}
\thanks{P.~J.~Cho was supported by Basic Science Research Program through the National Research Foundation of Korea(NRF) funded by the Ministry of Education(2016R1D1A1B03935186).}
\address{Department of Mathematical Sciences, Ulsan National Institute of Science and Technology, Ulsan, Korea}
\email{petercho@unist.ac.kr}

\author{Myoungil Kim}
\address{Department of Mathematical Sciences, Ulsan National Institute of Science and Technology, Ulsan, Korea}
\email{mikim@unist.ac.kr}

\subjclass[2010]{Primary 11F66, Secondary 11M41}

\begin{abstract}
We prove that the complete $L$-function associated to any cuspidal
automorphic representation of $\GL_2(\A_\Q)$ has infinitely many simple zeros. 
\end{abstract}

\maketitle

\section{Introduction}
In \cite{Booker}, the first author showed that the complete $L$-functions
associated to classical holomorphic newforms have infinitely many
simple zeros. The purpose of this paper is to extend that result to
the remaining degree $2$ automorphic $L$-functions over $\Q$, i.e.\
those associated to cuspidal Maass newforms. This also extends work of
the second author \cite{Cho} which established a quantitative estimate
for the first few Maass forms of level $1$.
When combined with the
holomorphic case from \cite{Booker}, we obtain the following:
\begin{Thm}\label{thm:main}
Let $\A_\Q$ denote the ad\`ele ring of $\Q$, and let $\pi$ be a cuspidal
automorphic representation of $\GL_2(\A_\Q)$. Then the associated complete
$L$-function $\Lambda(s,\pi)$ has infinitely many simple zeros.
\end{Thm}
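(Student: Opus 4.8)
The plan is to reduce at once to the case of a cuspidal Maass newform. Every cuspidal automorphic representation of $\GL_2(\A_\Q)$ is, after twisting by a power of $|\cdot|$, associated either to a holomorphic newform or to a Maass newform, and the holomorphic case is precisely the content of \cite{Booker}; so I would assume henceforth that $\pi$ arises from a Maass newform, normalized so that $\Lambda(s,\pi)$ is entire of order $1$, bounded in vertical strips away from the real axis, and satisfies the functional equation $\Lambda(s,\pi)=\varepsilon(\pi)\,\Lambda(1-s,\tilde\pi)$. I would then argue by contradiction, supposing that $\Lambda(s,\pi)$ has only finitely many simple zeros. Equivalently, there is a height $T_0$ beyond which every zero $\rho=\beta+i\gamma$ with $|\gamma|>T_0$ has multiplicity $m_\rho\ge2$, so that $\Lambda'(\rho,\pi)=0$ at every such $\rho$.

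The mechanism for detecting simple zeros is that the meromorphic function $\frac{\Lambda'(s,\pi)^2}{\Lambda(s,\pi)}$ has a simple pole of residue $\Lambda'(\rho,\pi)$ at every simple zero $\rho$ and is regular at every zero of multiplicity $\ge2$. Integrating this function, times a Dirichlet-polynomial mollifier $M(s)=\sum_{n\le y}a(n)n^{-s}$ and a weight $w$ localizing $\Im s$ to a window of length $T$, around a rectangle in the critical strip therefore isolates the weighted simple-zero sum $\Sigma_1=\sum_{\rho\ \mathrm{simple}}\Lambda'(\rho,\pi)M(\rho)w(\rho)$. Since $\Lambda(s,\pi)$ differs from the finite $L$-function $L(s,\pi)$ only by nonvanishing archimedean factors inside the strip, their nontrivial zeros and multiplicities agree, and the evaluation of $\Sigma_1$, together with the companion second moment $\Sigma_2=\sum_{\rho\ \mathrm{simple}}|\Lambda'(\rho,\pi)M(\rho)|^2w(\rho)$, reduces to mollified first- and second-moment estimates for $L'(\tfrac12+it,\pi)$ on the critical line. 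A Cauchy--Schwarz inequality then yields
\[
  \#\{\rho\ \mathrm{simple}:\ 0<\Im\rho\le T\}\ \ge\ \frac{|\Sigma_1|^2}{\Sigma_2},
\]
so the theorem will follow once the mollifier is chosen so that the right-hand side is unbounded as $T\to\infty$.

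The main obstacle, and the feature genuinely new to the Maass setting, is the \emph{unconditional} evaluation of these two moments. For holomorphic newforms one has the Ramanujan bound $|\lambda_\pi(n)|\le d(n)$ and a well-understood archimedean factor, whereas for Maass forms I would have to proceed only with the Kim--Sarnak bound toward Ramanujan and with $\Gamma$-factors whose spectral shifts $\tfrac12(s\pm it_\pi)$ involve a parameter $t_\pi$ that may be purely imaginary in the presence of an exceptional eigenvalue. Controlling $\Sigma_2$ amounts to a mollified second-moment estimate for $L'(\tfrac12+it,\pi)$ with a power-saving error term, and it is here that subconvexity for $\GL_2$ and a delicate stationary-phase analysis of the archimedean integrals enter; I expect this estimate, together with verifying that the mollifier length $y$ can be taken large enough to force $|\Sigma_1|^2/\Sigma_2\to\infty$, to be where essentially all of the difficulty lies. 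The non-self-dual case requires carrying $\tilde\pi$ alongside $\pi$ throughout, but I anticipate this to be bookkeeping rather than an essential new difficulty.
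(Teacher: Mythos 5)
Your reduction to the Maass case and your choice of detector are in the right spirit: the paper likewise disposes of the holomorphic case by citing \cite{Booker}, and its function $D_f(s)=L_f(s)\frac{d^2}{ds^2}\log L_f(s)=L_f''(s)-L_f'(s)^2/L_f(s)$ has, up to a sign and a regular term, exactly the simple poles with residue $L_f'(\rho)$ at simple zeros that your $\Lambda'(s,\pi)^2/\Lambda(s,\pi)$ does. From that point on, however, your route is not the paper's, and it contains a gap that you flag but do not close: the entire argument rests on unconditional asymptotics for the mollified first and second moments of $L'(\tfrac12+it,\pi)$ with a mollifier long enough to force $|\Sigma_1|^2/\Sigma_2\to\infty$. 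For a degree $2$ $L$-function this is not known; it would deliver a quantitative (power-of-$T$, indeed positive-proportion-flavoured) count of simple zeros, far stronger than the theorem, and the evaluation of $\Sigma_2$ in particular requires integrating $\frac{\Lambda'}{\Lambda}$ against $|\Lambda'M|^2$ off the critical line, which is not under unconditional control. Note also that your opening ``argue by contradiction'' hypothesis is never used in the moment argument that follows, which signals that two distinct strategies have been conflated; the moment method, if it worked, would need no such hypothesis.

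The paper's mechanism avoids moments entirely and uses the finiteness hypothesis in an essential way. One fixes a prime $q\nmid N$ with $|\lambda_f(q)|<2$ (such $q$ exist by the Rankin--Selberg asymptotic for $\sum_{q\le x}|\lambda_f(q)|^2$) and studies the additive twist $D_f(s,1/q,\cos)=\sum_{n\ge1}c_f(n)\cos(2\pi n/q)n^{-s}$. Expanding $\cos(2\pi n/q)$ in Dirichlet characters mod $q$, the non-principal pieces have all their poles in the critical strip by Jacquet--Shalika, while the principal-character piece $D_f(s,\chi_0)$ has infinitely many poles on $\Re(s)=0$ coming from the simple zeros of the Euler polynomial $1-\lambda_f(q)q^{-s}+\xi(q)q^{-2s}$. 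On the other hand, assuming $\Lambda_f(s)$ has only finitely many simple zeros, Proposition~\ref{prop:main} --- proved via a pseudo-modularity relation for the formal Fourier series attached to $D_f$, Mellin transforms of $K$-Bessel functions against $\cos$ and $\sin$, and an analysis of the resulting hypergeometric factors --- forces $D_f(s,1/q,\cos)$ to be holomorphic away from $s\pm\nu\in\Z$, and the clash on $\Re(s)=0$ gives the contradiction. To salvage your approach you would need, at minimum, an unconditional proof that $\sum_{T<\Im\rho\le 2T}\Lambda'(\rho,\pi)M(\rho)w(\rho)\ne0$ for all large $T$, which is an open problem of a different order from anything required in this paper.
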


The basic idea of the proof is the same as in \cite{Booker}, which is in
turn based on the method of Conrey and Ghosh \cite{CG}.
Let $f$ be a primitive Maass cuspform of weight $k\in\{0,1\}$ for
$\Gamma_0(N)$ with nebentypus character $\xi$, and let $L_f(s)$ be the
finite $L$-function attached to $f$:
$$
L_f(s)=\sum_{n=1}^\infty\lambda_f(n) n^{-s}.
$$
We define 
$$
D_f(s)=L_f(s) \frac{d^2}{ds^2} \log{L_f(s)}=\sum_{n=1}^\infty c_f(n)n^{-s}.
$$
Then it is easy to see that $D_f(s)$ has a pole at some point if and only if
$L_f(s)$ has a simple zero there.

For $\alpha\in\Q$ and $j\ge0$ we define the additive twists
$$
L_f(s,\alpha,\cos^{(j)})=\sum_{n=1}^\infty\lambda_f(n)
\cos^{(j)}(2\pi n\alpha)n^{-s},\quad
D_f(s,\alpha,\cos^{(j)})=\sum_{n=1}^\infty c_f(n)
\cos^{(j)}(2\pi n\alpha)n^{-s},
$$
where $\cos^{(j)}$ denotes the $j$th derivative of the cosine function.
Let $q\nmid N$ be a prime and $\chi_0$ the principal character mod $q$.
Then we have the following expansions of
the trigonometric functions in terms of Dirichlet characters:
\begin{align*}
\cos\!\left(\frac{2\pi n}q\right)
&=1-\frac{q}{q-1}\chi_0(n) + \frac{\sqrt{q}}{q-1}
\sum_{\substack{\chi\pmod*{q}\\\chi(-1)=1\\\chi\neq\chi_0}}
\overline{\epsilon_\chi}\chi(n),\\
\sin\!\left(\frac{2\pi n}q\right)
&=\frac{\sqrt{q}}{q-1}
\sum_{\substack{\chi\pmod*{q}\\\chi(-1)=-1}}
\overline{\epsilon_\chi}\chi(n),\\
\end{align*}
where $\epsilon_\chi$ denotes the root number of the Dirichlet
$L$-function $L(s,\chi)$.
In particular, we have
$$
D_f(s,\tfrac1q,\cos)=D_f(s)-\frac{q}{q-1}D_f(s,\chi_0) + \frac{\sqrt{q}}{q-1}
\sum_{\substack{\chi\pmod*{q}\\\chi(-1)=1\\\chi\neq\chi_0}}
\overline{\epsilon_\chi}D_f(s,\chi),
$$
where
$$
D_f(s,\chi)=\sum_{n=1}^\infty c_f(n)\chi(n)n^{-s}
$$
is the corresponding multiplicative twist.

By the non-vanishing results for automorphic $L$-functions \cite{JS},
all non-trivial poles of $D_f(s)$ and $D_f(s,\chi)$ for $\chi\ne\chi_0$
are located in the critical strip $\{s\in\C:0<\Re(s)<1\}$.
However, for the case of the principal character, since
$$
L_f(s,\chi_0)=\sum_{n=1}^\infty\lambda_f(n)\chi_0(n) n^{-s}
=(1-\lambda_f(q)q^{-s} + \xi(q) q^{-2s})L_f(s),
$$
$D_f(s,\chi_0)$ has a pole at every simple zero of the local Euler factor
polynomial, $1-\lambda_f(q)q^{-s}+\xi(q)q^{-2s}$, at which $L_f(s)$
does not vanish.

Since $f$ is cuspidal, the Rankin--Selberg method
implies that the average of $|\lambda_f(q)|^2$ over primes $q$ is $1$,
i.e.\
\begin{equation}\label{eq:RS}
\lim_{x\to\infty}
\frac{\sum_{\substack{q\text{ prime}\\q\le x}}|\lambda_f(q)|^2}
{\#\{q\text{ prime}:q\le x\}}
=1.
\end{equation}
To see this, write
$$
-\frac{L_f'}{L_f}(s)=\sum_{n=1}^\infty\Lambda(n)a_nn^{-s},
$$
where $\Lambda$ is the von Mangoldt function and
$a_n=0$ unless $n$ is prime or a prime power. Then by
\cite[Lemma~5.2]{LY}, we have
\begin{equation}\label{eq:RS2}
\sum_{n\le x}\Lambda(n)|a_n|^2\sim x
\quad\text{as }x\to\infty.
\end{equation}
By the estimate of Kim and Sarnak \cite{Kim}, we have
$|a_n|\le n^{7/64}+n^{-7/64}$, so the contribution of composite $n$ to
\eqref{eq:RS2} is $O(x^{\frac{23}{32}})$. Since $a_q=\lambda_f(q)$ for
primes $q$, this implies that
$$
\sum_{\substack{q\text{ prime}\\q\le x}}(\log{q})|\lambda_f(q)|^2\sim x,
$$
and \eqref{eq:RS} follows by partial summation and the prime number
theorem.

In particular, there are infinitely many $q\nmid N$ such that
$|\lambda_f(q)|<2$. For
any such $q$, it follows that $D_f(s,\chi_0)$ has infinitely many poles
on the line $\Re(s)=0$.  In view of the above, $D_f(s,1/q,\cos)$ inherits
these poles when they occur.
On the other hand, under the assumption that $L_f(s)$ has at most finitely many
non-trivial simple zeros, we will show that
$D_f(s,1/q,\cos)$ is holomorphic apart from possible poles along two
horizontal lines. The contradiction between these two implies the
main theorem. 

\subsection{Overview}
We begin with an overview of the proof. First, by \cite[(4.36)]{DFI},
$f$ has the Fourier--Whittaker expansion
$$
f(x+iy)=\sum_{n=1}^\infty
\left(\rho(n)W_{\frac k2,\nu}(4\pi ny)e(nx)
+\rho(-n)W_{-\frac k2,\nu}(4\pi ny)e(-nx)\right),
$$
where $W_{\alpha,\beta}$ is the Whittaker function defined in
\cite[(4.20)]{DFI}, and $\nu=\sqrt{\frac14-\lambda}$, where
$\lambda$ is the eigenvalue of $f$ with respect to the
weight $k$ Laplace operator. When $k=1$, the Selberg eigenvalue
conjecture holds, so that $\nu\in i[0,\infty)$. When $k=0$ the
conjecture remains open, but we have the partial result of
Kim--Sarnak \cite{Kim} that $\nu\in(0,\frac{7}{64}]\cup i[0,\infty)$.

Since $f$ is primitive, it is an eigenfunction of the operator $Q_{sk}$
defined in \cite[(4.65)]{DFI}, so that
$$
\rho(-n)=\epsilon\frac{\Gamma(\frac{1+k}2+\nu)}{\Gamma(\frac{1-k}2+\nu)}\rho(n)
=\epsilon\nu^k\rho(n)
$$
for some $\epsilon\in\{\pm1\}$.
Further, we have $\rho(n)=\rho(1)\lambda_f(n)/\sqrt{n}$.
Choosing the normalization $\rho(1)=\pi^{-\frac{k}2}$
and writing $e(\pm nx)=\cos(2\pi nx)\pm i\sin(2\pi nx)$,
we obtain the expansion
\begin{equation}\label{eq:fseries}
f(x+iy)=\sum_{n=1}^\infty \frac{\lambda_f(n)}{\sqrt{n}}
\bigl(V_f^+(ny)\cos(2\pi nx)
+iV_f^-(ny)\sin(2\pi nx)\bigr),
\end{equation}
where
\begin{equation}\label{eq:Vdef}
V_f^{\pm}(y)=\pi^{-\frac{k}2}\left(
W_{\frac{k}2,\nu}(4\pi y)\pm\epsilon\nu^kW_{-\frac{k}2,\nu}(4\pi y)\right)
=\begin{cases}
4\sqrt{y}K_{\nu}(2\pi y)
&\text{if }k=0\text{ and }\epsilon=\pm1,\\
0&\text{if }k=0\text{ and }\epsilon=\mp1,\\
4yK_{\nu\pm\frac{\epsilon}2}(2\pi y)
&\text{if }k=1.
\end{cases}
\end{equation}

Let $\bar{f}(z):=\overline{f(-\bar{z})}$ denote the dual of $f$.
Since $f$ is primitive, it is also an eigenfunction of the operator
$\overline{W}_k$ defined in \cite[(6.10)]{DFI}, so we have
\begin{equation}\label{eq:fmod}
f(z)=\eta\left(i\frac{|z|}{z}\right)^k
\bar{f}\!\left(-\frac1{Nz}\right)
\end{equation}
for some $\eta\in\C$ with $|\eta|=1$.

Next we define a formal Fourier series $F(z)$ associated to $D_f(s)$
by replacing $\lambda_f(n)$ in the above by $c_f(n)$:
\begin{align*}
F(x+iy)=\sum_{n=1}^\infty \frac{c_f(n)}{\sqrt{n}}
\bigl(V_f^+(ny)\cos(2\pi nx)
+iV_f^-(ny)\sin(2\pi nx)\bigr).
\end{align*}
We expect $F(z)$ to satisfy a relation similar to the modularity
relation \eqref{eq:fmod}. To make this precise, we first recall the
functional equation for $L_f(s)$. Define
\begin{equation}\label{eq:gammadef}
\gamma_f^{\pm}(s)
=\Gamma_\R\!\left(s+\frac{1\mp(-1)^k\epsilon}2+\nu\right)
\Gamma_\R\!\left(s+\frac{1\mp\epsilon}2-\nu\right).
\end{equation}
Then the complete $L$-function
$\Lambda_f(s):=\gamma_f^+(s)L_f(s)$ satisfies
\begin{equation}\label{eq:FE}
\Lambda_f(s)=\eta\epsilon^{1-k}N^{\frac12-s}\Lambda_{\bar{f}}(1-s),
\end{equation}
with $\eta$ as above.

We define a completed version of $D_f(s)$ by multiplying by the same
$\Gamma$-factor: $\Delta_f(s):=\gamma_f^+(s)D_f(s)$.
Then, differentiating the functional equation \eqref{eq:FE},
we obtain
\begin{equation}\label{eq:FEofD}
\Delta_f(s)+\bigl(\psi_f'(s)-\psi_{\bar{f}}'(1-s)\bigr)\Lambda_f(s)
=\eta\epsilon^{1-k}N^{\frac12-s}\Delta_{\bar{f}}(1-s),
\end{equation}
where $\psi_f(s):=\frac{d}{ds}\log\gamma_f^+(s)$.
In Section~\ref{sec:AFE}, we take a suitable inverse Mellin transform of
\eqref{eq:FEofD}. Under the assumption that $\Lambda_f(s)$ has at
most finitely many simple zeros, this yields a pseudo-modularity
relation for $F$ of the form
\begin{equation}\label{eq:FEofF}
F(z)+A(z) =\eta\left(i\frac{|z|}{z}\right)^k
\overline{F}\!\left(-\frac1{Nz}\right)+B(z),
\end{equation}
for certain auxiliary functions $A$ and $B$, where
$\overline{F}(z):=\overline{F(-\bar{z})}$.
Roughly speaking, $A$ is the contribution from the correction term
$\bigl(\psi_f'(s)-\psi_{\bar{f}}'(1-s)\bigr)\Lambda_f(s)$
in \eqref{eq:FEofD}, and $B$ comes from the non-trivial poles of
$\Delta_f(s)$.

The main technical ingredient needed to carry this out is
the following pair of Mellin transforms involving the $K$-Bessel
function and trigonometric functions \cite[6.699(3) and 6.699(4)]{GR}:
\begin{equation}\label{eq:Ksin}
\int_0^\infty x^{\lambda+1} K_\mu (ax) \sin(bx) \frac{dx}{x}
=2^\lambda b \Gamma\!\left( \frac{2+\lambda+\mu}{2}\right)
\Gamma\!\left( \frac{2+\lambda - \mu}{2}\right)
\F\!\left(\frac{2+\lambda+\mu}{2},\frac{2+\lambda-\mu}{2};\frac{3}{2};-\frac{b^2}{a^2}\right)
\end{equation}
and
\begin{equation}\label{eq:Kcos}
\int_0^\infty x^{\lambda+1} K_\mu (ax) \cos(bx) \frac{dx}{x}
=\frac{2^{\lambda-1}}{a^{\lambda+1}}
\Gamma\!\left( \frac{1+\lambda+\mu}{2}\right)
\Gamma\!\left( \frac{1+\lambda - \mu}{2}\right)
\F\!\left( \frac{1+\lambda+\mu}{2},\frac{1+\lambda - \mu}{2};\frac{1}{2};-\frac{b^2}{a^2}\right),
\end{equation}
where
\begin{equation}\label{eq:2F1def}
\F(a,b;c;z)=
\sum_{j=1}^\infty\frac{a(a+1)\cdots(a+j-1)
\cdot b(b+1)\cdots(b+j-1)}{c(c+1)\cdots(c+j-1)}
\frac{z^j}{j!}
\end{equation}
is the Gauss hypergeometric function.
The origin of these hypergeometric factors is explained in the
introduction to \cite{BT}, and the need to analyze them is the main
difference between this paper and the holomorphic case from
\cite{Booker} (for which corresponding factors are elementary functions).

Specializing \eqref{eq:FEofF} to $z=\alpha+iy$ for $\alpha\in\Q^\times$,
we have
\begin{equation}\label{eq:mainid}
F(\alpha+iy)+A(\alpha+iy)=\eta\left(i\frac{|\alpha+iy|}{\alpha+iy}\right)^k
\overline{F}\!\left(-\frac1{N(\alpha+iy)}\right)+B(\alpha+iy).
\end{equation}
We will take the Mellin transform of \eqref{eq:mainid}. Without
difficulty the reader can guess that the transform of $F(\alpha+iy)$ will
be a combination of $D_f(s,\alpha,\cos)$ and $D_f(s,\alpha,\sin)$. The
calculation of the other terms is non-trivial, but ultimately we obtain
the following proposition, which will play the role of Proposition~2.1
in \cite{Booker}:
\begin{Prop}\label{prop:main}
Suppose that $\Lambda_f(s)$ has at most finitely many simple zeros.
Then, for every $M\in\Z_{\ge0}$ and $a\in\{0,1\}$,
\begin{align*}
&P_f(s;a,0)\Delta_f(s,\alpha, \cos^{(a+k)})\\
&-\eta(-\sgn\alpha)^k(N\alpha^2)^{s-\frac12}
\sum_{m=0}^{M-1}\frac{(2\pi N\alpha)^m}{m!}
P_f(s;a,m)\Delta_{\bar{f}}\!\left(
s+m,-\frac1{N\alpha},\cos^{(a+m)}\right)
\end{align*}
is holomorphic for $\Re(s)>\frac32-M$ except for possible poles for
$s\pm\nu\in\Z$, where
$$
P_f(s;a,m)
=\frac{\gamma_{f}^{(-)^a}(1-s) }{\gamma_{f}^{(-)^a}(1-s-2\lfloor m/2\rfloor)}
\begin{cases}
\frac{s+2\lfloor m/2\rfloor-(-1)^a\epsilon\nu}{2\pi}
&\text{if }k=1\text{ and }2\nmid m,\\
0&\text{if }k=0\text{ and }(-1)^a=-\epsilon,\\
1&\text{otherwise}
\end{cases}
$$
and
$$
\Delta_{f}(s,\alpha,\cos^{(a)})=
\gamma_{f}^{(-)^a}(s)D_{f}(s,\alpha,\cos^{(a)}).
$$
\end{Prop}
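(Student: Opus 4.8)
\section*{Proof proposal}

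The plan is to take the Mellin transform in $y$ of the specialized relation \eqref{eq:mainid} and to read the two sides as completed additive twists of $D_f$ and $D_{\bar f}$. Working first in a right half-plane where every Dirichlet series and integral below converges absolutely, I would substitute into $F(\alpha+iy)$ the Fourier expansion of $F$ (that is, \eqref{eq:fseries} with $\lambda_f$ replaced by $c_f$) and integrate term by term. Since $\alpha$ is fixed, each $\cos(2\pi n\alpha)$ and $\sin(2\pi n\alpha)$ is constant in $y$, so the $y$-integral collapses to the Mellin transform of $V_f^\pm(ny)$; by \eqref{eq:Vdef} this is, up to the factor $n^{-s}$, the Mellin transform of a single $K$-Bessel function and hence reproduces the archimedean factor $\gamma_f^{\pm}$ of \eqref{eq:gammadef}. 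Summing over $n$ identifies the transform of $F(\alpha+iy)$ with the completed twists $\Delta_f(s,\alpha,\cos^{(a)})$, $a\in\{0,1\}$, the parameter $a$ selecting the cosine ($a=0$) or sine ($a=1$) Fourier component. The weight $k$ and the shape of \eqref{eq:Vdef} then produce the derivative order $a+k$ and the leading factor $P_f(s;a,0)$, whose vanishing when $k=0$ and $(-1)^a=-\epsilon$ simply records that the relevant $V_f^{\pm}$ is identically zero.

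The substantive computation is the transform of the right-hand term $\overline{F}(-1/(N(\alpha+iy)))$. As $y$ runs over $(0,\infty)$ the point $-1/(N(\alpha+iy))$ traces a circular arc through the rational point $-1/(N\alpha)$, so inserting the Fourier expansion of $\overline{F}$ produces $K$-Bessel factors in the imaginary part $y/(N(\alpha^2+y^2))$ and trigonometric factors in the real part $\alpha/(N(\alpha^2+y^2))$. The one-dimensional integrals so obtained are of the type evaluated by \eqref{eq:Ksin} and \eqref{eq:Kcos}, each of which returns a pair of $\Gamma$-factors times a Gauss hypergeometric function. Expanding that ${}_2F_1$ through its defining series \eqref{eq:2F1def} and truncating after $M$ terms is what manufactures the finite sum $\sum_{m=0}^{M-1}$: with $j=\lfloor m/2\rfloor$ the hypergeometric index, the coefficient $(2\pi N\alpha)^m/m!$ appears, the Pochhammer symbols reassemble into the quotient $\gamma_f^{(-)^a}(1-s)/\gamma_f^{(-)^a}(1-s-2j)$ in $P_f(s;a,m)$, the prefactor $b$ carried by \eqref{eq:Ksin} accounts for the linear factor $(s+2j-(-1)^a\epsilon\nu)/(2\pi)$ in the weight-one, odd-$m$ case, and the attendant shift of the Mellin variable and increment of the derivative order give $\Delta_{\bar f}(s+m,-1/(N\alpha),\cos^{(a+m)})$ along with the power $(N\alpha^2)^{s-\frac12}$ and the sign $(-\sgn\alpha)^k$.

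Because \eqref{eq:mainid} is an identity of functions, its Mellin transform is an identity of meromorphic functions, obtained by analytic continuation from the half-plane of absolute convergence; moving the $\Delta_f$ and $\Delta_{\bar f}$ terms to one side leaves only the transforms of the auxiliary functions $A$ and $B$ together with the hypergeometric tail. The poles that the two families of completed additive twists carry in the critical strip match under the functional equation and are compensated, up to the finitely many residues collected in $B$, by their counterparts on the dual side; here the hypothesis that $\Lambda_f$ has only finitely many simple zeros is exactly what makes $B$ a finite, explicit correction. The remaining singularities come from the term $A$, i.e.\ from the correction $(\psi_f'(s)-\psi_{\bar f}'(1-s))\Lambda_f(s)$ of \eqref{eq:FEofD}: since $\Lambda_f$ is entire while $\psi_f'$ and $\psi_{\bar f}'$ have double poles precisely at the poles of the $\Gamma$-factors, these singularities are confined to the points $s\pm\nu\in\Z$.

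The main obstacle I anticipate is twofold. First, because the arc is nonlinear, reducing the integrals of the previous paragraph to the exact forms \eqref{eq:Ksin} and \eqref{eq:Kcos} is not a direct substitution and requires genuine work, as does the subsequent control of the hypergeometric functions. Second, and most seriously, one must show that the remainder left by the hypergeometric expansions is holomorphic in the asserted half-plane $\Re(s)>\frac32-M$. This calls for uniform estimates on the tails of the ${}_2F_1$ series, equivalently on the Bessel--trigonometric integrals near $y=0$ where the arc meets the real axis, sharp enough both to justify interchanging the $m$-summation with the Mellin integral and to pin down the abscissa: the untruncated transform converges only for $\Re(s)>\frac32$, and each additional term in the expansion extends the domain of holomorphy of the remainder one unit to the left. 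Alongside these analytic points, the bookkeeping that matches the $\Gamma$-quotients, signs and derivative orders to the closed form of $P_f(s;a,m)$ --- and in particular the simultaneous treatment of the $k=0$ and $k=1$ cases of \eqref{eq:Vdef} --- is delicate, and it is here that the analysis of the hypergeometric factors, trivial in the holomorphic case of \cite{Booker}, is genuinely required.
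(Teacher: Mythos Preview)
Your overall plan---take the Mellin transform in $y$ of \eqref{eq:mainid} and identify the two sides with completed additive twists---is the paper's plan, and your treatment of the $F(\alpha+iy)$ side is correct. The gap is on the dual side.

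You propose to insert the Fourier expansion of $\overline{F}$ into $\overline{F}(-1/(N(\alpha+iy)))$ and evaluate the resulting $y$-integrals via \eqref{eq:Ksin}--\eqref{eq:Kcos}, then truncate the ${}_2F_1$ series. But \eqref{eq:Ksin}--\eqref{eq:Kcos} require the Bessel and trigonometric arguments to be \emph{linear} in the integration variable, whereas here both enter through $y/(N(\alpha^2+y^2))$ and $\alpha/(N(\alpha^2+y^2))$. You note that this ``is not a direct substitution,'' but you do not indicate any way to reduce to those formulas, and in fact the paper does not use them for this purpose at all. In the paper, \eqref{eq:Ksin}--\eqref{eq:Kcos} are used only to compute the \emph{ray} transform $G_f(s,\omega)$ in \eqref{eq:Gdef}, which in turn is needed for the analysis of $B$ via the hypergeometric factor $H_f(s,\omega)$; they play no role in transforming $\overline{F}(-1/(Nz))$ along the vertical line $\Re z=\alpha$.

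The missing idea is a Taylor expansion in $y$ around $y=0$. Writing $u=y/\alpha$, one expands $V_{\bar f}^\pm\bigl(|\beta n u|/(1+u^2)\bigr)$, $\cos^{(a)}\bigl(2\pi\beta n/(1+u^2)\bigr)$ and $(|1+iu|/(1+iu))^k(1+u^2)^{-j-l}$ in powers of $u$, with remainders controlled uniformly in $n$ by a derivative estimate for $V_{\bar f}^\pm$ (the paper's Lemma~\ref{lem:Kerror}). Each term of the expansion then has an elementary Mellin transform in $y$, and a nontrivial combinatorial collapse---Chu--Vandermonde together with the binomial identities of \cite[Lemma~A.1]{BK}---turns the triple sum over Taylor indices into the closed form $P_f(s;a,m)$. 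This is where the quotient $\gamma_f^{(-)^a}(1-s)/\gamma_f^{(-)^a}(1-s-2\lfloor m/2\rfloor)$ and the linear factor in the $k=1$, $2\nmid m$ case actually come from, not from Pochhammer symbols in a truncated ${}_2F_1$.

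Two further points. First, your treatment of $A$ and $B$ is too loose to yield the stated pole locations. The Mellin transform of $A(\alpha+iy)$ is not obtained by evaluating $\psi_f'$ at $s$; one writes $\psi'(s+\nu)+\psi'(s-\nu)$ as an integral, unfolds, and integrates by parts repeatedly to push holomorphy left (the paper's Lemma~\ref{lem:AMellin}). For $B$, the finiteness hypothesis makes the residue sum finite, but one still has to show that both this sum and the $X_f\Lambda_f$-integral contribute only an asymptotic series in $y^{j+\frac12\pm\nu}$ (hence poles only at $s\pm\nu\in\Z$), which requires the Euler connection formula for $H_f(s,\omega)$ together with explicit coefficient bounds (Lemma~\ref{lem:Bseries}). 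Second, you omit the final parity step: the raw Mellin transform mixes $a=0$ and $a=1$; the single-$a$ statement is obtained by forming $\tfrac12\bigl(i^{k+a}h(\alpha)+i^{-k-a}h(-\alpha)\bigr)$.
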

\subsection{Proof of Theorem~\ref{thm:main}}
Assuming Proposition~\ref{prop:main} for the moment, we can complete the
proof of Theorem~\ref{thm:main} for the case of
$\pi$ corresponding to a Maass cusp form, $f$. First,
as noted above, we may choose a prime $q\nmid N$ for which
$D_f(s,1/q,\cos)$ has infinitely many poles on the line $\Re(s)=0$.
Then, by Dirichlet's theorem on primes in an arithmetic progression,
for any $M\in\Z_{>0}$ there are distinct primes $q_0,q_1,\ldots,q_{M-1}$
such that
$q_j\equiv q\pmod*{N}$ and
$D_{\bar{f}}(s,-q_j/N,\cos^{(a)})
=D_{\bar{f}}(s,-q/N,\cos^{(a)})$
for all $j$, $a$.

Let $m_0$ be an integer with $0\leq m_0\leq M-1$. By the Vandermonde
determinant, there exist rational numbers $c_0,c_1,\ldots,c_{M-1}$
such that
$$
\sum_{j=0}^{M-1}c_jq_j^{-m}=
\begin{cases}
1&\text{if }m=m_0,\\
0&\text{if }m\ne m_0
\end{cases}
\quad\text{for all }m\in\{0,1,\ldots,M-1\}.
$$
We fix $\delta\in\{0,1\}$ and apply Proposition~\ref{prop:main}
with $a\equiv\delta+m_0\pmod*{2}$ and $\alpha=1/q_j$ for
$j=0,1,\ldots,M-1$. Multiplying by $(-1)^kc_j(q_j^2/N)^{s-\frac12}$,
summing over $j$ and replacing $s$ by $s-m_0$, we find that
\begin{align*}
\sum_{j=0}^{M-1}&(-1)^kc_j\left(\frac{q_j^2}{N}\right)^{s-m_0-\frac12}
P_f(s-m_0;\delta+m_0,0)
\Delta_f\!\left(s-m_0,\frac1{q_j},\cos^{(\delta+m_0+k)}\right)\\
&-\eta\frac{(-2\pi N)^{m_0}}{m_0!}P_f(s-m_0;\delta+m_0,m_0)
\Delta_{\bar{f}}\!\left(s,-\frac{q}{N},\cos^{(\delta)}\right) 
\end{align*}
is holomorphic on $\{s\in\Omega:\Re(s)>\frac32+m_0-M\}$, where we set
$$
\Omega=\{s\in\C:s\pm\nu\notin\Z\}.
$$
Since $D_f(s-m_0,1/q_j,\cos^{(\delta+m_0+k)})$ is holomorphic on
$\{s\in\Omega:\Re(s)<m_0-\frac12\}$,
choosing $m_0=2+\delta+\frac{1-\epsilon}2$ and $M$ arbitrarily large,
we conclude that $D_{\bar{f}}(s,-q/N,\cos^{(\delta)})$ is holomorphic on
$\Omega$.

Next we apply Proposition~\ref{prop:main} again with $a=k$,
$\alpha=1/q$ and $M=2$.  When $k=1$ or $k=0$ and $\epsilon=1$, we
see that $D_f(s,1/q,\cos)$ is holomorphic on $\{s\in\Omega:\Re(s)=0\}$.
This is a contradiction, and Theorem~\ref{thm:main} follows in these
cases.

The remaining case is that of odd Maass forms of weight $0$.
The above argument with $\delta=1$ shows that
$D_f(s,-q/N,\sin)$ is entire apart from possible poles for
$s\pm\nu\in\Z$. Applying Proposition~\ref{prop:main} with
$a=1$, $\alpha=-q/N$ and $M=3$, we find that
\begin{align*}
-\Delta_f\!\left(s,-\frac{q}{N},\sin\right)
+\eta\left(\frac{q^2}N\right)^{s-\frac12}\biggl[
&\Delta_{\bar{f}}\!\left(s,\frac1q,\sin\right)
-2\pi q\Delta_{\bar{f}}\!\left(s+1,\frac1q,\cos\right)\\
&-\frac{(2\pi q)^2}{2!}P_f(s;1,2)
\Delta_{\bar{f}}\!\left(s+2,\frac1q,\sin\right)
\biggr]
\end{align*}
is holomorphic on $\{s\in\Omega:\Re(s)>-\frac52\}$.
Since $D_{\bar{f}}(s,1/q,\sin)$ is holomorphic on the lines $\Re(s)=-1$
and $\Re(s)=1$, we see that $D_{\bar{f}}(s,1/q,\cos)$ is
holomorphic on $\{s\in\Omega:\Re(s)=0\}$.
This is again a contradiction, and concludes the proof.

\section{Proof of Proposition \ref{prop:main}}\label{sec:AFE}
Using the expansion \eqref{eq:fseries},
we take the Mellin transform of \eqref{eq:fmod} along the line $z=(\w+i)y$.
First, the left-hand side becomes, for $\Re(s)\gg1$,
\begin{equation}\label{eq:LHSMellin}
\begin{aligned}
\int_0^\infty f(\w y+iy)y^{s-\frac12}\frac{dy}{y}
&=\sum_{n=1}^\infty\frac{\lambda_f(n)}{\sqrt{n}}
\int_0^\infty
\bigl(V_f^+(ny)\cos(2\pi n\w y)
+iV_f^-(ny)\sin(2\pi n\w y)\bigr)
y^{s-\frac12}\frac{dy}{y}\\
&=G_f(s,\w)L_f(s),
\end{aligned}
\end{equation}
where, by \eqref{eq:Vdef}, \eqref{eq:Ksin} and \eqref{eq:Kcos},
\begin{equation}\label{eq:Gdef}
\begin{aligned}
&G_f(s,\w)=\int_0^\infty
\bigl(V_f^+(y)\cos(2\pi\w y)+iV_f^-(y)\sin(2\pi\w y)\bigr)
y^{s-\frac12}\frac{dy}{y}\\
&=\begin{cases}
(2\pi i\w)^{\frac{1-\epsilon}2}\gamma_f^+(s)
\F\!\left(\frac{s+\frac{1-\epsilon}2+\nu}2,\frac{s+\frac{1-\epsilon}2-\nu}2;
1-\frac{\epsilon}2;-\w^2\right)
&\text{if }k=0,\\
\gamma_f^+(s)\F\!\left(\frac{s+\frac{1+\epsilon}2+\nu}{2},
\frac{s+\frac{1-\epsilon}2-\nu}{2};\frac12;-\w^2\right)
+2\pi i\w\gamma_f^-(s+1)
\F\!\left(\frac{s+\frac{3-\epsilon}2+\nu}{2} ,
\frac{s+\frac{3+\epsilon}2-\nu}{2};\frac32;-\w^2\right)
&\text{if }k=1.
\end{cases}
\end{aligned}
\end{equation}
Note that we have 
$G_{\bar{f}}(s,\w)=\overline{G_f(\bar{s},-\w)}$.

On the other hand, the Mellin transform of the right-hand side of
\eqref{eq:fmod} is, for $-\Re(s)\gg1$,
$$
\eta\left(i\frac{|\w+i|}{\w+i}\right)^k\int_0^\infty
\bar{f}\!\left(-\frac{\w}{N(\w^2+1)y}+\frac{i}{N(\w^2+1)y}\right)
y^{s-\frac12}\frac{dy}{y}.
$$
Making the substitution $y\mapsto(N(\w^2+1)y)^{-1}$, this becomes
\begin{equation}\label{eq:RHSMellin}
\eta\left(i\frac{|\w+i|}{\w+i}\right)^k\bigl(N(1+\w^2)\bigr)^{\frac12-s}
\int_0^\infty\bar{f}(-\w y+iy)y^{\frac12-s}\frac{dy}{y}
=\eta\left(i\frac{|\w+i|}{\w+i}\right)^k
\bigl(N(1+\w^2)\bigr)^{\frac12-s}
G_{\bar{f}}(1-s,-\w)L_{\bar{f}}(1-s).
\end{equation}

By \eqref{eq:fmod}, \eqref{eq:LHSMellin} and \eqref{eq:RHSMellin} must continue
to entire functions and equal each other.
In particular, taking $\w\to0$, we
recover the functional equation \eqref{eq:FE}.
Equating \eqref{eq:LHSMellin} with \eqref{eq:RHSMellin} and dividing by
\eqref{eq:FE}, we discover the functional equation
for the hypergeometric factor $H_f(s,\w):=G_f(s,\w)/\gamma_f^+(s)$: 
\begin{equation}\label{eq:FEofG}
H_f(s,\w)=
\epsilon^{1-k}\left(i\frac{|\w+i|}{\w+i}\right)^k
(1+\w^2)^{\frac12-s}H_{\bar{f}}(1-s,-\w).
\end{equation}

Next, for $z=x+iy\in\H$, define
$$
A(z)=
\frac1{2\pi i}\int_{\Re(s)=\frac12}
\bigl(\psi'(s+\nu)+\psi'(s-\nu)\bigr)H_f(s,x/y)\Lambda_f(s)
y^{\frac12-s}\,ds
$$
and
\begin{equation}\label{eq:Bdef}
B(z)=
\frac1{2\pi i}\int_{\Re(s)=\frac12}X_f(s)\Lambda_f(s)
H_f(s,x/y)y^{\frac12-s}\,ds
-\sum_\rho\Lambda_f'(\rho)H_f(\rho,x/y)y^{\frac12-\rho},
\end{equation}
where the sum runs over all simple zeros of $\Lambda_f(s)$, and
$$
X_f(s)=\frac{\pi^2}{4}\left[
\csc^2\!\left(\frac{\pi}{2}\left[s+\frac{1+(-1)^k\epsilon}2+\nu\right]\right)
+\csc^2\!\left(\frac{\pi}{2}\left[s+\frac{1+\epsilon}2-\nu\right]\right)
\right].
$$

\begin{Lem}\label{lem:FEofF}
\begin{equation*}
F(z)+A(z)
=\eta\left(i\frac{|z|}{z}\right)^k\overline{F}\!\left(-\frac1{Nz}\right)+B(z)
\quad\text{for all }z\in\H.
\end{equation*}
\end{Lem}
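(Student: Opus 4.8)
The plan is to derive \eqref{eq:FEofF} along the ray $z=(\w+i)y$ by taking an inverse Mellin transform of the differentiated functional equation \eqref{eq:FEofD}, mirroring the forward computation \eqref{eq:LHSMellin}--\eqref{eq:Gdef} that produced \eqref{eq:FE}. First I would record a Mellin--Barnes representation for $F$. Carrying out the term-by-term Mellin transform of \eqref{eq:LHSMellin} with $\lambda_f(n)$ replaced by $c_f(n)$ gives $\int_0^\infty F(\w y+iy)\,y^{s-\frac12}\,\frac{dy}{y}=H_f(s,\w)\Delta_f(s)$ for $\Re(s)\gg1$; the polynomial growth of the $c_f(n)$ together with the exponential decay of the $K$-Bessel functions in \eqref{eq:Vdef} justifies the interchange. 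Mellin inversion then yields
\begin{equation*}
F(z)=\frac1{2\pi i}\int_{\Re(s)=\sigma}H_f(s,x/y)\,\Delta_f(s)\,y^{\frac12-s}\,ds\qquad(z=x+iy,\ \sigma\gg1),
\end{equation*}
and the analogous representation holds for $\overline{F}$ with $(f,x/y)$ replaced by $(\bar f,-x/y)$, by the conjugation symmetry $G_{\bar f}(s,\w)=\overline{G_f(\bar s,-\w)}$ recorded after \eqref{eq:Gdef}.

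I would then shift the contour to the central line $\Re(s)=\frac12$. Since a simple zero $\rho$ of $\Lambda_f$ produces a simple pole of $\Delta_f$ with $\Res_{s=\rho}\Delta_f=-\Lambda_f'(\rho)$, whereas zeros of higher order contribute no pole, the hypothesis that $\Lambda_f$ has finitely many simple zeros ensures that only finitely many residues are crossed; these supply part of the sum over $\rho$ in \eqref{eq:Bdef}. On $\Re(s)=\frac12$ I substitute \eqref{eq:FEofD}, splitting the integrand into the main term $\eta\epsilon^{1-k}N^{\frac12-s}\Delta_{\bar f}(1-s)$ and the correction $-(\psi_f'(s)-\psi_{\bar f}'(1-s))\Lambda_f(s)$. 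In the main term I replace $s$ by $1-s$ and invoke \eqref{eq:FEofG} for $H_f$: the factor $\epsilon^{1-k}$ squares to $1$, the powers of $N$, $|z|$ and $y$ consolidate, and one is left with exactly $\eta\bigl(i|z|/z\bigr)^k$ times the central-line integral representing $\overline{F}(-\tfrac1{Nz})$. Returning this integral to the natural contour for $\overline{F}$ crosses the simple zeros $\rho'$ of $\Lambda_{\bar f}$; matching $\rho'\leftrightarrow 1-\rho'$ through \eqref{eq:FE} and \eqref{eq:FEofG} shows that the resulting residues are precisely the missing terms with $\Re(\rho)<\frac12$, so that together with the first shift they reconstitute the full sum over all simple zeros $\rho$ in \eqref{eq:Bdef}.

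It remains to identify the correction $-(\psi_f'(s)-\psi_{\bar f}'(1-s))\Lambda_f(s)$ with $-A(z)$ plus the $X_f$-integral in \eqref{eq:Bdef}. Writing the $\psi_f'$ as sums of trigamma values coming from $\tfrac{d^2}{ds^2}\log\gamma_f^\pm$ and applying the reflection identity $\psi'(u)+\psi'(1-u)=\pi^2\csc^2(\pi u)$, I expect $-(\psi_f'(s)-\psi_{\bar f}'(1-s))$ to decompose as $X_f(s)-\bigl(\psi'(s+\nu)+\psi'(s-\nu)\bigr)$, the first summand reproducing the $X_f$-integral of $B$ and the second the integrand of $A$. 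Assembling the three ingredients---the $\overline{F}$ term, the trigamma split, and the combined residue sum---then gives \eqref{eq:FEofF} for $z$ on each ray, hence for all $z\in\H$ since every $z=x+iy$ lies on the ray with $\w=x/y$.

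I expect two steps to be the main obstacles. The first is the trigamma bookkeeping just described: verifying the reflection-formula identity that cleanly separates the symmetric part ($A$) from the $\csc^2$ part ($X_f$) requires carrying the archimedean data $(k,\epsilon,\nu)$ in \eqref{eq:gammadef} through the duality $f\mapsto\bar f$ with the correct placement of the shifts and the correct sign of $\nu$. The second is justifying the contour shifts themselves, which demands uniform vertical-strip bounds for the hypergeometric factor $H_f(s,x/y)$; unlike the elementary factors arising in the holomorphic case of \cite{Booker}, these $\F$ factors are the genuinely new analytic input, and controlling their growth is precisely where the analysis of \cite{BT} enters.
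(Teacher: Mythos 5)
Your overall strategy coincides with the paper's --- Mellin--Barnes representations for $F$ and $\overline{F}$, the functional equations \eqref{eq:FEofG} and \eqref{eq:FEofD}, the trigamma identity $\psi_f'(s)-\psi_{\bar f}'(1-s)=\psi'(s+\nu)+\psi'(s-\nu)-X_f(s)$, and residues of $\Delta_f$ at simple zeros --- and your separation of the correction term into $A$ and the $X_f$-integral is exactly right. But there is a genuine gap in your contour scheme: you propose to shift the integral of $H_f(s,x/y)\Delta_f(s)y^{\frac12-s}$ onto the line $\Re(s)=\tfrac12$ and to substitute \eqref{eq:FEofD} there. Unlike $\Lambda_f$, the function $\Delta_f$ has a pole at every simple zero of $\Lambda_f$, and you have no control over whether such zeros lie on the critical line (they are expected to be ubiquitous there). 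So the contour $\Re(s)=\tfrac12$ may pass through poles of the integrand, the shift is ill-defined, and your two-sided residue count ($\Re\rho>\tfrac12$ from the first shift, $\Re\rho<\tfrac12$ from returning the $\overline{F}$-integral) omits any simple zeros with $\Re\rho=\tfrac12$. The paper's arrangement is designed precisely to avoid this: it writes the dual side as an integral on $\Re(s)=-1$, applies \eqref{eq:FEofG} and \eqref{eq:FEofD} there, peels off only the term $\psi_{\bar f}'(1-s)\Lambda_f(s)$ (holomorphic for $\Re(s)\le\tfrac12$, so it can be moved to the critical line harmlessly), and then sweeps the remaining integrand
\begin{equation*}
H_f(s,x/y)\bigl(\Delta_f(s)+\psi_f'(s)\Lambda_f(s)\bigr)y^{\frac12-s}
=H_f(s,x/y)\,\Lambda_f(s)\frac{d^2}{ds^2}\log\Lambda_f(s)\,y^{\frac12-s}
\end{equation*}
from $\Re(s)=-1$ to $\Re(s)=2$ in a single pass, collecting every simple-zero residue exactly once via the rectangular contour $\mathcal{C}$. (Keeping $\psi_f'\Lambda_f$ grouped with $\Delta_f$ inside the strip also disposes of the higher-order poles of $\Delta_f$ at the trivial zeros, i.e.\ at the poles of $\gamma_f^+$, which a shift through $0<\Re(s)<\tfrac12$ would otherwise have to confront.) Your argument can be repaired by working on $\Re(s)=\tfrac12\pm\delta$ and regrouping as above, but as written the central-line step fails.

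Two smaller points. You invoke the hypothesis that $\Lambda_f$ has at most finitely many simple zeros; Lemma~\ref{lem:FEofF} carries no such hypothesis and the paper's proof does not use it --- the sum over $\rho$ in \eqref{eq:Bdef} runs over \emph{all} simple zeros and is realised as the (convergent) contour integral over $\mathcal{C}$. That hypothesis enters only later, in Lemma~\ref{lem:Bseries}. Second, the uniform control of $H_f$ that you flag as a principal obstacle is lighter here than you suggest: for fixed $\w=x/y$ the decay needed on the vertical lines follows from the integral representation \eqref{eq:Gdef} together with Stirling's formula; the genuinely delicate hypergeometric analysis (the expansion of $H_f(s,\w)y^{\frac12-s}$ in powers of $y$ with controlled coefficients) is deferred to Lemma~\ref{lem:Bseries}.
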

\begin{proof}
Fix $z=x+iy\in\H$, and put $\w=x/y$. Applying Mellin inversion as in
\eqref{eq:LHSMellin}, we have
\begin{equation*}
F(z)=\frac1{2\pi i}\int_{\Re(s)=2}D_f(s)G_f(s,\w)y^{\frac12-s}\,ds
\end{equation*}
and
\begin{align*}
\eta\left(i\frac{|z|}{z}\right)^k
\overline{F}\!\left(-\frac1{Nz}\right)
&=\eta\left(i\frac{|\w+i|}{\w+i}\right)^k
\cdot\frac1{2\pi i}\int_{\Re(s)=2}
G_{\bar{f}}(s,-\w)D_{\bar{f}}(s)
\bigl(N(1+\w^2)y\bigr)^{s-\frac12}\,ds\\
&=\eta\left(i\frac{|\w+i|}{\w+i}\right)^k
\cdot\frac1{2\pi i}\int_{\Re(s)=-1}
H_{\bar{f}}(1-s,-\w)\Delta_{\bar{f}}(1-s)
\bigl(N(1+\w^2)y\bigr)^{\frac12-s}\,ds.
\end{align*}
Applying \ref{eq:FEofG} and \eqref{eq:FEofD},
and using the fact that $\psi_{\bar{f}}'(1-s)$ is holomorphic for
$\Re(s)\le\frac12$, the last line becomes
\begin{align*}
\frac1{2\pi i}&\int_{\Re(s)=-1}
\eta\epsilon^{1-k}H_f(s,\w)\Delta_{\bar{f}}(1-s)(Ny)^{\frac12-s}\,ds\\
&=\frac1{2\pi i}\int_{\Re(s)=-1}H_f(s,\w)
\Bigl[\Delta_f(s)+\bigl(\psi_f'(s)-\psi_{\bar{f}}'(1-s)\bigr)\Lambda_f(s)\Bigr]
y^{\frac12-s}\,ds\\
&=\frac1{2\pi i}\int_{\Re(s)=-1}
H_f(s,\w)\Bigl[\Delta_f(s)+\psi_f'(s)\Lambda_f(s)\Bigr]y^{\frac12-s}\,ds
-\frac1{2\pi i}\int_{\Re(s)=\frac12}H_f(s,\w)
\psi_{\bar{f}}'(1-s)\Lambda_f(s)y^{\frac12-s}\,ds.
\end{align*}
Shifting the contour of the first integral to the right and using that
$\psi_f'(s)$ is holomorphic for $\Re(s)\ge\frac12$, we get
\begin{align*}
\frac1{2\pi i}\int_{\Re(s)=2}&
H_f(s,\w)\Delta_f(s)y^{\frac12-s}\,ds
-\frac1{2\pi i}\int_{\mathcal{C}}H_f(s,\w)
\bigl(\Delta_f(s)+\psi_f'(s)\Lambda_f(s)\bigr)y^{\frac12-s}\,ds\\
&+\frac1{2\pi i}\int_{\Re(s)=\frac12}
\bigl(\psi_f'(s)-\psi_{\bar{f}}'(1-s)\bigr)H_f(s,\w)\Lambda_f(s)
y^{\frac12-s}\,ds,
\end{align*}
where $\mathcal{C}$ is the contour running from $2-i\infty$ to $2+i\infty$
and from $-1+i\infty$ to $-1-i\infty$.
Note that
$$
\Delta_f(s)+\psi_f'(s)\Lambda_f(s)
=\Lambda_f(s)\frac{d^2}{ds^2}\log\Lambda_f(s),
$$
which has a pole at every simple zero $\rho$ of $\Lambda_f(s)$, with
residue $-\Lambda_f'(\rho)$. Hence,
$$
-\frac1{2\pi i}\int_{\mathcal{C}}
H_f(s,\w)\bigl(\Delta_f(s)+\psi_f'(s)\Lambda_f(s)\bigr)y^{\frac12-s}\,ds
=\sum_\rho\Lambda_f'(\rho)H_f(\rho,\w)y^{\frac12-\rho}.
$$

Next, writing $\psi_\R(s)=\frac{\Gamma_\R'}{\Gamma_\R}(s)$, we have
$$
\psi_f(s)=\psi_\R\!\left(s+\frac{1-(-1)^k\epsilon}2+\nu\right)
+\psi_\R\!\left(s+\frac{1-\epsilon}2-\nu\right).
$$
Applying the reflection formula and Legendre duplication formula in the
form
$$
\psi_\R'(s)=\frac{\pi^2}{4}\csc^2\!\left(\frac{\pi s}{2}\right)-\psi_\R'(2-s)
\quad\text{and}\quad
\psi_\R'(s)+\psi_\R'(s+1)=\psi'(s),
$$
we derive
\begin{align*}
\psi_f'(s)-\psi_{\bar{f}}'(1-s)=\psi'(s+\nu)+\psi'(s-\nu)-X_f(s).
\end{align*}
Thus,
$$
\frac1{2\pi i}\int_{\Re(s)=\frac12}
\bigl(\psi_f'(s)-\psi_{\bar{f}}'(1-s)\bigr)H_f(s,\w)\Lambda_f(s)
y^{\frac12-s}\,ds
=A(z)-\frac1{2\pi i}\int_{\Re(s)=\frac12}X_f(s)
H_f(s,\w)\Lambda_f(s)y^{\frac12-s}\,ds.
$$
Rearranging terms completes the proof.
\end{proof}

\begin{Lem}\label{lem:AMellin}
For any $\alpha\in\Q^\times$,
$$
\frac1{\Gamma(s+\nu)\Gamma(s-\nu)}
\int_0^\infty A(\alpha+iy)y^{s-\frac 12}\frac{dy}{y}
$$
continues to an entire function of $s$.
\end{Lem}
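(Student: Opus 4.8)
The plan is to substitute the definition of $A$ into the integral, interchange the order of integration, and evaluate the inner $y$-integral explicitly. Writing $s'$ for the variable in the definition of $A$ and putting $\w=\alpha/y$, we obtain, at least for $\Re(s)$ in a suitable vertical strip (justified by the estimates below),
\[
\int_0^\infty A(\alpha+iy)y^{s-\frac12}\frac{dy}{y}
=\frac1{2\pi i}\int_{\Re(s')=\frac12}
\bigl(\psi'(s'+\nu)+\psi'(s'-\nu)\bigr)\Lambda_f(s')
\Bigl(\int_0^\infty H_f(s',\alpha/y)\,y^{s-s'}\frac{dy}{y}\Bigr)ds'.
\]
The substitution $y\mapsto\alpha/\w$ turns the inner integral into $|\alpha|^{s-s'}$ times the Mellin transform of $H_f(s',\cdot)$ in $\w$; since each hypergeometric summand of $H_f$ has definite parity in $\w$ by \eqref{eq:Gdef}, the case $\alpha<0$ costs only explicit signs. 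By \eqref{eq:Gdef} this is a power of $\w$ against $\F(\cdot,\cdot;\cdot;-\w^2)$, so it is evaluated by the standard Mellin transform of a Gauss hypergeometric function — equivalently, by the very transforms \eqref{eq:Ksin}, \eqref{eq:Kcos} that produced $G_f$ in the first place.

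The decisive feature is an exact matching of parameters. In the hypergeometric Mellin formula the two numerator gamma factors emerge as $\Gamma(\frac{s+\nu}2)$ and $\Gamma(\frac{s-\nu}2)$ (for $k=0$, $\epsilon=1$; in the remaining cases as the analogous factors shifted by half-integers), while the denominator gamma factors are precisely those assembling $\gamma_f^+(s')$ and therefore cancel the $\gamma_f^+(s')$ inside $\Lambda_f(s')=\gamma_f^+(s')L_f(s')$. Hence the transform factors as $\Gamma(\frac{s+\nu}2)\Gamma(\frac{s-\nu}2)$ times a contour integral $J(s)$ in $s'$ whose integrand involves only $L_f(s')$, the $\psi'$ factors, elementary powers of $|\alpha|$, $2$ and $\pi$, and a quotient of gamma functions of $\frac{s'-s}2$. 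Dividing by $\Gamma(s+\nu)\Gamma(s-\nu)$ and applying the Legendre duplication formula rewrites the prefactor as $\pi\,2^{2-2s}/\bigl(\Gamma(\frac{s+\nu+1}2)\Gamma(\frac{s-\nu+1}2)\bigr)$ (and an analogous reciprocal-gamma quotient in the other cases), which is entire because $1/\Gamma$ is entire. Everything thus reduces to showing that $J(s)$ is holomorphic in $s$.

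This holomorphy is the crux. To justify the interchange and to continue $J(s)$, I would establish vertical-line estimates: the exponential decay of $\gamma_f^+$, the convexity bound for $L_f$, and Stirling for the $\frac{s'-s}2$ gamma-quotient (whose modulus, after the cancellation above, is only polynomial in $\Im(s')$), which together give absolute convergence in a half-plane and permit continuation by contour shifts. The subtler point is that the double poles of $\psi'(s'\pm\nu)$ must not create new poles of $J$ in $s$. The clean way to see this — and how I would finally organize the argument — is to read off the $y\to0^+$ asymptotics of $A(\alpha+iy)$ from the large-argument expansion of $\F(\cdot,\cdot;\cdot;-\w^2)$: the $s'$-dependence cancels out of the exponents, leaving (for $k=0$, $\epsilon=1$) an expansion purely in the powers $y^{\frac12\pm\nu+2m}$ with $m\ge0$, while $A(\alpha+iy)$ decays rapidly as $y\to\infty$ (the gamma factors in $\Lambda_f$ forcing the decay). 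Thus the Mellin transform is entire except for simple poles at $s\pm\nu\in2\Z_{\le0}$ — exactly the pole set carried by the extracted $\Gamma(\frac{s+\nu}2)\Gamma(\frac{s-\nu}2)$ — and these are annihilated by the zeros of $1/\bigl(\Gamma(s+\nu)\Gamma(s-\nu)\bigr)$, yielding the asserted entire function. The cases $k=1$ and $k=0$, $\epsilon=-1$ run along the same lines (with the poles and the extracted gamma factors shifted accordingly, but always within the zero set of $1/\bigl(\Gamma(s+\nu)\Gamma(s-\nu)\bigr)$), the only extra work being to track the shifted hypergeometric parameters and, for $k=1$, the second summand of \eqref{eq:Gdef} with its factor $\gamma_f^-(s'+1)/\gamma_f^+(s')$.
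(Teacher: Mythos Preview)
Your approach is genuinely different from the paper's, and the factorization you identify is correct: the inner Mellin transform of $H_f(s',\cdot)$ does produce $\Gamma(\frac{s+\nu}{2})\Gamma(\frac{s-\nu}{2})$ in the numerator and $\gamma_f^+(s')$ (up to a power of $\pi$) in the denominator, cancelling it against $\Lambda_f(s')$. The paper takes another route: it writes $\psi'(s'+\nu)+\psi'(s'-\nu)=\int_1^\infty\phi(x)\,x^{\frac12-s'}\,dx$ with $\phi(x)=\cosh(\nu\log x)\log x/\sinh(\tfrac12\log x)$, and uses this with Mellin inversion to unfold $A(\alpha+iy)$ into an explicit series $\sum_n\frac{\lambda_f(n)}{\sqrt n}\int_1^\infty\phi(x)\bigl(V_f^+(nxy)\cos(2\pi\alpha nx)+\cdots\bigr)\,dx$. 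The Mellin in $y$ then extracts $\widetilde V_f^\pm(s)$, whose ratio to $\Gamma(s+\nu)\Gamma(s-\nu)$ is visibly entire, and the remaining $x$-integral is continued by repeated integration by parts, reducing matters to the (cited) entirety of the additive twists $L_f(s,\alpha,\cos^{(j)})$.

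The gap in your argument is the step where you obtain the $y\to0^+$ asymptotics of $A(\alpha+iy)$ by integrating the large-$\w$ expansion of $H_f(s',\w)\,y^{\frac12-s'}$ term-by-term against $\bigl(\psi'(s'+\nu)+\psi'(s'-\nu)\bigr)\Lambda_f(s')$ over $\Re(s')=\frac12$. The coefficients of that expansion grow like $(2e^{\pi/2})^{(1+\varepsilon)|s'|}$ on the line---this is exactly the estimate \eqref{eq:abestimate} that the paper proves in Lemma~\ref{lem:Bseries}---whereas $\bigl(\psi'(s'+\nu)+\psi'(s'-\nu)\bigr)\Lambda_f(s')$ only decays like $e^{-(\pi/2-\varepsilon)|s'|}$; the product diverges like $2^{|s'|}$, so term-by-term integration is not justified. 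The paper runs precisely this asymptotic argument for $B$ in Lemma~\ref{lem:Bseries}, but only because the factor $X_f(s')$ there supplies an additional $e^{-\pi|s'|}$ of decay, so that $X_f\Lambda_f\ll e^{-(3\pi/2-\varepsilon)|s'|}$ beats $(2e^{\pi/2})^{|s'|}$ (since $2<e^\pi$). No such extra decay is available for $A$, which is exactly why the paper treats $A$ by the entirely different $\phi$-integral device. Your alternative of continuing $J(s)$ by contour shifts would require tracking residues from both the double poles of $\psi'(s'\pm\nu)$ and the simple poles of $\Gamma(\tfrac{s'-s}{2})$ and verifying that all their apparent poles in $s$ cancel; you flag this subtlety but do not carry it out.
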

\begin{proof}
Define $\Phi(s)=\psi'(s+\nu)+\psi'(s-\nu)$. Then we have
$\Phi(s)=\int_1^\infty\phi(x)x^{\frac12-s}\,dx$,
where $\phi(x)=\frac{\cosh(\nu\log x)\log x}{\sinh(\frac12\log{x})}$.
Applying \eqref{eq:Gdef} and the change of variables $y\mapsto xt$, we have
\begin{align*}
\Phi(s)G_f(s,\w)&=\int_1^\infty\int_0^\infty
\phi(x)\bigl(V_f^+(y)\cos(2\pi\w y)
+iV_f^-(y)\sin(2\pi\w y)\bigr)
\left(\frac{y}{x}\right)^{s-\frac12}
\frac{dy}{y}\,dx\\
&=\int_0^\infty\left(\int_1^\infty
\phi(x)\bigl(V_f^+(tx)\cos(2\pi\w tx)
+iV_f^-(tx)\sin(2\pi\w tx)\bigr)
\,dx\right)t^{s-\frac12}\frac{dt}{t}.
\end{align*}
Hence, writing $\w=\alpha/y$, we have
\begin{align*}
A(\alpha+iy)&=\frac1{2\pi i}\int_{\Re(s)=2}
\Lambda_f(s)\Phi(s)H_f(s,\w)y^{\frac12-s}\,ds
=\sum_{n=1}^\infty\frac{\lambda_f(n)}{\sqrt{n}}
\frac1{2\pi i}\int_{\Re(s)=2}\Phi(s)G_f(s,\w)(ny)^{\frac12-s}\,ds\\
&=\sum_{n=1}^\infty\frac{\lambda_f(n)}{\sqrt{n}}\int_1^\infty
\phi(x)\bigl(V_f^+(nxy)\cos(2\pi\alpha nx)
+iV_f^-(nxy)\sin(2\pi\alpha nx)\bigr)\,dx,
\end{align*}
so that
\begin{align*}
\int_0^\infty A(\alpha+iy)y^{s-\frac12}\frac{dy}{y}
&=\sum_{n=1}^\infty\frac{\lambda_f(n)}{\sqrt{n}}\int_1^\infty
\phi(x)\int_0^\infty\bigl(V_f^+(nxy)\cos(2\pi\alpha nx)
+iV_f^-(nxy)\sin(2\pi\alpha nx)\bigr)
y^{s-\frac12}\frac{dy}{y}\,dx\\
&=\sum_{n=1}^\infty \lambda_f(n)n^{-s}\int_1^\infty\phi(x)x^{\frac12-s}
\Bigl(\widetilde{V}_f^+(s)\cos(2\pi\alpha nx)
+i\widetilde{V}_f^-(s)\sin(2\pi\alpha nx)\Bigr)
\,dx,
\end{align*}
where
\begin{equation}\label{eq:tVdef}
\widetilde{V}_f^\pm(s)=
\int_0^\infty V_f^\pm(y)y^{s-\frac12}\frac{dy}y
=\begin{cases}
\gamma_f^\pm(s)&\text{if }k=1\text{ or }\epsilon=\pm1,\\
0&\text{otherwise}.
\end{cases}
\end{equation}
A case-by-case inspection of \eqref{eq:gammadef} shows that
$\widetilde{V}_f^\pm(s)/(\Gamma(s+\nu)\Gamma(s-\nu))$
is entire for both choices of sign.

Define $\phi_j=\phi_j(x,s)$ for $j\ge0$ by
$$
\phi_0=\phi,
\quad\text{and}\quad
\phi_{j+1}=x\frac{\partial \phi_j}{\partial x}
-(s+j-\tfrac12)\phi_j.
$$
Then, applying integration by parts $m$ times, we see that
\begin{align*}
\int_1^\infty\phi(x)\cos(2\pi\alpha nx)x^{\frac12-s}\,dx
=\sum_{j=0}^{m-1}\frac{\cos^{(j+1)}(2\pi\alpha n)}{(2\pi\alpha n)^{j+1}}
\phi_j(1,s)
+\int_1^\infty\frac{\cos^{(m)}(2\pi\alpha nx)}{(2\pi\alpha n)^m}\phi_k(x,s)
x^{\frac12-m-s}\,dx
\end{align*}
and
\begin{align*}
\int_1^\infty\phi(x)\sin(2\pi\alpha nx)x^{\frac12-s}\,dx
=\sum_{j=0}^{m-1}\frac{\sin^{(j+1)}(2\pi\alpha n)}{(2\pi\alpha n)^{j+1}}
\phi_j(1,s)
+\int_1^\infty\frac{\sin^{(m)}(2\pi\alpha nx)}{(2\pi\alpha n)^m}\phi_k(x,s)
x^{\frac12-m-s}\,dx.
\end{align*}
Thus,
\begin{align*}
&\int_0^\infty A(\alpha+iy)y^{s-\frac12}\frac{dy}{y}\\
&=\widetilde{V}_f^+(s)\left[
\sum_{j=0}^{m-1}\frac{\phi_j(1,s)L(f,s+j+1,\alpha,\cos^{(j+1)})}
{(2\pi\alpha)^{j+1}}
+\frac1{(2\pi\alpha)^m}\sum_{n=1}^\infty\frac{a_f(n)}{n^{s+m}}
\int_1^\infty\cos^{(m)}(2\pi\alpha nx)\phi_m(x,s)x^{\frac12-m-s}\,dx\right]\\
&+i\widetilde{V}_f^-(s)\left[
\sum_{j=0}^{m-1}\frac{\phi_j(1,s)L(f,s+j+1,\alpha,\sin^{(j+1)})}
{(2\pi\alpha)^{j+1}}
+\frac1{(2\pi\alpha)^m}\sum_{n=1}^\infty\frac{a_f(n)}{n^{s+m}}
\int_1^\infty\sin^{(m)}(2\pi\alpha nx)\phi_m(x,s)x^{\frac12-m-s}\,dx\right].
\end{align*}
It follows from \cite[Prop.~3.1]{BK} that $L_f(s,\alpha,\cos)$
and $L_f(s,\alpha,\sin)$ continue to entire functions.
We see by induction that
$\phi_m(x,s)\ll_m\bigl((1+|s|)(1+|\nu|)\bigr)^mx^{-1}$
uniformly for $x\ge1$, and thus
the integral terms above are holomorphic for $\Re(s)>\frac12-m$.
Choosing $m$ arbitrarily large, the lemma follows. 
\end{proof}

\begin{Lem} \label{lem:Kerror}
For any $\sigma\ge0$ and any $l\in\Z_{\ge0}$, we have
$$
\frac{y^l}{l!}(V_{\bar{f}}^\pm)^{(l)}(y)\ll_\sigma 2^l y^{-\sigma}
\quad\text{for }y>0.
$$
\end{Lem}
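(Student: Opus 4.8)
The plan is to reduce the statement to a pointwise bound on $V_{\bar{f}}^\pm$ itself via a Cauchy integral estimate. By \eqref{eq:Vdef}, $V_{\bar{f}}^\pm(y)$ either vanishes identically---in which case there is nothing to prove---or equals $4y^aK_\mu(2\pi y)$, where $a=\tfrac12$ and $\mu=\nu$ when $k=0$, and $a=1$ and $\mu=\nu\pm\tfrac12$ when $k=1$ (here $\nu_{\bar{f}}=\nu$, since the Laplace eigenvalue is real). As $K_\mu(z)$ is holomorphic for $\Re(z)>0$, the function $w\mapsto 4w^aK_\mu(2\pi w)$ extends holomorphically to the right half-plane $\Re(w)>0$.

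First I would apply Cauchy's integral formula on the circle $|w-y|=y/2$, on and inside which $\Re(w)\ge y/2>0$, so that $V_{\bar{f}}^\pm$ is holomorphic there. This gives
\[
(V_{\bar{f}}^\pm)^{(l)}(y)=\frac{l!}{2\pi i}\oint_{|w-y|=y/2}\frac{V_{\bar{f}}^\pm(w)}{(w-y)^{l+1}}\,dw,
\]
whence
\[
\frac{y^l}{l!}\bigl|(V_{\bar{f}}^\pm)^{(l)}(y)\bigr|\le 2^l\max_{|w-y|=y/2}\bigl|V_{\bar{f}}^\pm(w)\bigr|.
\]
The factor $2^l$ is exactly the one required, with no hidden $l$-dependence, so the lemma reduces to the bound $\max_{|w-y|=y/2}|V_{\bar{f}}^\pm(w)|\ll_\sigma y^{-\sigma}$.

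To establish this I would split into the ranges $y\ge1$ and $0<y\le1$ and invoke the standard asymptotics of the $K$-Bessel function. On the circle one has $\Re(w)\ge y/2$, $|w|\asymp y$, and $|\arg w|\le\pi/6$. For $y\ge1$ the uniform asymptotic $K_\mu(z)=\sqrt{\pi/(2z)}\,e^{-z}\bigl(1+O_\mu(|z|^{-1})\bigr)$, valid for $|\arg z|\le\pi/2-\delta$, applies with $z=2\pi w$ and yields $|V_{\bar{f}}^\pm(w)|\ll y^{a-\frac12}e^{-\pi y}\ll_\sigma y^{-\sigma}$, since $a\le1$. For $0<y\le1$ I would instead use the small-argument bound $|K_\mu(z)|\ll_\mu |z|^{-|\Re\mu|}\bigl(1+|\log|z||\bigr)$, so that $|V_{\bar{f}}^\pm(w)|\ll |w|^{a-|\Re\mu|}\bigl(1+|\log|w||\bigr)$ on the circle. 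The Kim--Sarnak bound forces $|\Re\mu|\le\tfrac12$ in all cases (indeed $|\Re\nu|\le\tfrac{7}{64}$ when $k=0$, and $\Re(\nu\pm\tfrac12)=\pm\tfrac12$ when $k=1$, as $\nu\in i[0,\infty)$ there), so the exponent $a-|\Re\mu|$ is strictly positive; hence this quantity stays bounded on $(0,1]$ and in particular is $\ll1\le y^{-\sigma}$.

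The main obstacle, such as it is, lies in the last step: controlling $K_\mu$ uniformly over the full admissible range of orders, in particular the borderline cases of purely imaginary order (where the near-origin behavior is oscillatory and bounded rather than power-type) and the exceptional order $\mu=0$ (where a logarithm appears). In every case the strict positivity of $a-|\Re\mu|$, which rests on the Kim--Sarnak bound, is precisely what guarantees that the near-origin contribution remains bounded, and this is the one point at which the specific arithmetic input on $\nu$ enters.
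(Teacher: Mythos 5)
Your argument is correct, but it proceeds quite differently from the paper's. The paper never touches the Bessel function directly: it writes $V_{\bar{f}}^\pm(y)$ as an inverse Mellin transform of $\widetilde{V}_{\bar{f}}^\pm(s)=\gamma_{\bar f}^\pm(s)$ on the line $\Re(s)=\sigma+\frac12$ (legitimate since $|\Re(\nu)|<\frac12$ keeps the contour to the right of all poles of the $\Gamma$-factors), differentiates under the integral sign to produce the binomial coefficient $\binom{\frac12-s}{l}$, bounds it by $2^{|s-\frac12|+l}$, and then pulls out $y^{-\sigma}$ from the contour location, with absolute convergence of the remaining integral guaranteed by Stirling because $\log 2<\pi/2$. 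Your route instead uses Cauchy's integral formula on the circle $|w-y|=y/2$ to trade the $l$-th derivative for a sup of $V_{\bar f}^\pm$ itself at cost exactly $2^l$, and then invokes the classical large- and small-argument asymptotics of $K_\mu$ in the sector $|\arg w|\le\pi/6$. Both arguments are clean and both ultimately rest on the same arithmetic input, $|\Re(\nu)|<\frac12$ (Kim--Sarnak), which for you appears as the strict positivity of $a-|\Re\mu|$ and for the paper as the admissibility of the contour $\Re(s)\ge\frac12$. What the paper's method buys is uniformity in $\sigma$ essentially for free and no case analysis on the order of the Bessel function (the borderline orders $\mu=0$ and $\mu\in i\R$, which you rightly single out as the delicate point, never arise); what yours buys is that it avoids Mellin theory entirely and makes the source of the constant $2^l$ geometrically transparent. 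Your bookkeeping of the constant (contour length $\pi y$ against $(y/2)^{l+1}$, giving exactly $2^l$ with no hidden $l$-dependence) and of the sector bound $|\arg w|\le\pi/6$ is accurate, so I see no gap.
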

\begin{proof}
In view of \eqref{eq:tVdef}, since $|\Re(\nu)|<\frac12$,
for any $\sigma\ge0$ we have the integral representation
$$
V_{\bar{f}}^\pm(y)=\frac1{2\pi i}\int_{\Re(s)=\sigma+\frac12}
\widetilde{V}_{\bar{f}}^\pm(s)y^{\frac12-s}\,ds.
$$
Differentiating $l$ times, we obtain
$$
\frac{y^l}{l!}(V_{\bar{f}}^\pm)^{(l)}(y)=\frac1{2\pi i}\int_{\Re(s)=\sigma+\frac12}
{{\frac12-s}\choose{l}}
\widetilde{V}_{\bar{f}}^\pm(s)y^{\frac12-s}\,ds.
$$
Using the estimate
$$
\left|{{\frac12-s}\choose{l}}\right|
=\left|{{s-\frac12+l}\choose{l}}\right|
\le2^{|s-\frac12|+l},
$$
we have
$$
\frac{y^l}{l!}(V_{\bar{f}}^\pm)^{(l)}(y)\le2^ly^{-\sigma}\cdot
\frac1{2\pi}\int_{\Re(s)=\sigma+\frac12}
2^{|s-\frac12|}\bigl|\widetilde{V}_{\bar{f}}^\pm(s)\,ds\bigr|
\ll_\sigma 2^ly^{-\sigma},
$$
where the last inequality is justified by Stirling's formula. 
\end{proof}
 
\begin{Lem}\label{lem:dualside}
Let $\alpha\in\Q^\times$ and $z=\alpha+iy$ for some $y\in(0,|\alpha|/2]$.
Then, for any integer $T\ge0$, we have
\begin{equation}\label{eq:taylor}
\begin{aligned}
\left(i\frac{|z|}{z}\right)^k\overline{F}\!\left(-\frac1{Nz}\right)
=O_{\alpha,T}(y^{T-1})&+(i\sgn(\alpha))^k
\sum_{t=0}^{T-1}\frac{(2\pi iN\alpha)^t}{t!}
\\
&\cdot \sum_{a\in\{0,1\}}
\frac{i^{-a}}{2\pi i}\int_{\Re(s)=2}
P_f(s;a+t,t)
\Delta_{\bar{f}}\!\left(s+t,-\frac1{N\alpha},\cos^{(a)}\right)
\left( \frac{y}{N\alpha^2} \right)^{\frac12-s}\,ds.
\end{aligned}
\end{equation}
\end{Lem}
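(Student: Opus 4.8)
The plan is to read off the $y\to0^+$ asymptotics of the left-hand side directly from the Fourier--Whittaker expansion of $\overline{F}$, and then repackage the outcome as the stated Mellin--Barnes integrals. Writing $w=-\tfrac1{Nz}=u+iv$ with $z=\alpha+iy$ and $\beta:=-\tfrac1{N\alpha}$, a direct computation gives
\begin{align*}
v&=\frac{y}{N\alpha^2}\Bigl(1+\tfrac{y^2}{\alpha^2}\Bigr)^{-1},\qquad
2\pi nu=2\pi n\beta\Bigl(1+\tfrac{y^2}{\alpha^2}\Bigr)^{-1},\\
\Bigl(i\tfrac{|z|}{z}\Bigr)^{k}&=(i\sgn\alpha)^{k}\bigl(1-\tfrac{iy}{\alpha}\bigr)^{k}\Bigl(1+\tfrac{y^2}{\alpha^2}\Bigr)^{-k/2},
\end{align*}
so that $w\to\beta$ and the prefactor tends to $(i\sgn\alpha)^{k}$ as $y\to0$. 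Applying \eqref{eq:fseries} and \eqref{eq:Vdef} to $\overline{F}$ (for $\bar{f}$), the left-hand side equals
\[
\Bigl(i\tfrac{|z|}{z}\Bigr)^{k}\sum_{n=1}^\infty\frac{c_{\bar{f}}(n)}{\sqrt n}
\Bigl(V_{\bar{f}}^{+}(nv)\cos(2\pi nu)+iV_{\bar{f}}^{-}(nv)\sin(2\pi nu)\Bigr).
\]

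First I would expand the three $y$-dependent ingredients to finite order about $y=0$: the elementary prefactor (a polynomial in $y$); the trigonometric factors about the rational point $2\pi n\beta$, which produces the derivatives $\cos^{(j)}(2\pi n\beta)$ and powers $n^{j}$; and $V_{\bar{f}}^{\pm}(nv)$ about its value at $\tfrac{ny}{N\alpha^2}$. The last expansion is where Lemma~\ref{lem:Kerror} is essential: since $nv=\tfrac{ny}{N\alpha^2}(1+y^2/\alpha^2)^{-1}$, the displacement is a small multiple of $\tfrac{ny}{N\alpha^2}$, and the bound on $\tfrac{Y^{l}}{l!}(V_{\bar{f}}^{\pm})^{(l)}(Y)$ makes the Taylor series in the index $l$ converge geometrically once $y\le|\alpha|/2$. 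Truncating all three expansions therefore leaves, after summation over $n$, a remainder of size $O_{\alpha,T}(y^{T-1})$.

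Next I would turn the finitely many surviving main terms into the claimed form. Each derivative $(V_{\bar{f}}^{\pm})^{(l)}(\tfrac{ny}{N\alpha^2})$ is represented as an inverse Mellin transform of $\widetilde V_{\bar{f}}^{\pm}(s)=\gamma_{\bar{f}}^{\pm}(s)$ via \eqref{eq:tVdef}; the even powers of $y$ coming from $(1+y^2/\alpha^2)^{\pm\,\cdot}$ and from $u-\beta$ shift the Mellin variable by even integers and, through $\Gamma(x+1)=x\Gamma(x)$, generate the quotient $\gamma_f^{(-)^a}(1-s)/\gamma_f^{(-)^a}(1-s-2\lfloor m/2\rfloor)$ appearing in $P_f$. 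The remaining $n$-sums $\sum_n c_{\bar{f}}(n)\cos^{(j)}(2\pi n\beta)\,n^{-s'}$ are precisely the additive twists $D_{\bar{f}}(s',\beta,\cos^{(a)})$ with $a\equiv j\pmod*{2}$, which continue to entire functions by \cite[Prop.~3.1]{BK}; this both justifies placing every contour at $\Re(s)=2$ and legitimises interchanging the summation with the integrals. The $t=0$ term, for which all corrections drop and one recovers $(i\sgn\alpha)^{k}\sum_n\tfrac{c_{\bar{f}}(n)}{\sqrt n}\bigl(\cos(2\pi n\beta)V_{\bar{f}}^{+}(\tfrac{ny}{N\alpha^2})+i\sin(2\pi n\beta)V_{\bar{f}}^{-}(\tfrac{ny}{N\alpha^2})\bigr)$, fixes the normalisations.

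The crux is the final bookkeeping, in which the several expansion indices must collapse into the single sum over $t$ with weight $\tfrac{(2\pi iN\alpha)^t}{t!}$ and the accumulated factors assemble into exactly $P_f(s;a+t,t)$. The factorial and the power $(2\pi i)^t$ become transparent on grouping through $e(\pm nu)=e(\pm nw)\,e^{\pm2\pi nv}$ and expanding the holomorphic $e(\pm n(w-\beta))$; the precise power of $N\alpha$ and the $\gamma$-quotients then emerge from the Mellin rebalancing of the accompanying $n^{t}$ and $(y/N\alpha^2)$ factors above. Two features demand genuine care. When $2\nmid m$ --- possible only for $k=1$ --- the single odd power of $y$, supplied by the odd prefactor term $-iy/\alpha$ and by differentiating $V_{\bar{f}}^{\pm}(y)=4yK_{\nu\pm\epsilon/2}(2\pi y)$, which mixes the two Whittaker indices through the Bessel contiguity relations, produces the linear factor $\tfrac{s+2\lfloor m/2\rfloor-(-1)^a\epsilon\nu}{2\pi}$; and when $k=0$ with $(-1)^a=-\epsilon$ the relevant $V_{\bar{f}}^{\pm}$ vanishes identically, forcing the corresponding $P_f$ to vanish. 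I expect verifying this exact identification of $P_f$ to be the principal obstacle, the remaining $\gamma$-factor algebra and the uniform remainder estimate from Lemma~\ref{lem:Kerror} being routine though lengthy.
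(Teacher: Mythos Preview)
Your outline follows the paper's proof: the same three Taylor expansions (the prefactor, the trigonometric factors about $2\pi n\beta$, and $V_{\bar{f}}^{\pm}$ about $ny/N\alpha^2$, the last controlled by Lemma~\ref{lem:Kerror}), followed by termwise Mellin inversion and a combinatorial collapse into $P_f$. One correction: the additive twists $D_{\bar{f}}(s,\beta,\cos^{(a)})$ are \emph{not} known to be entire --- \cite[Prop.~3.1]{BK} concerns $L_f$, not $D_f$, and the possible poles of the $D_f$-twists are precisely what drives the paper --- but you only need absolute convergence on $\Re(s)=2$, which is immediate from the Dirichlet series. On the combinatorics, the paper does not use your suggested holomorphic regrouping via $e(\pm n(w-\beta))$ or Bessel contiguity; instead it sums over triples $(j,l,m)$ with $j+2l+m=t$, applies the Chu--Vandermonde identity to collapse the $l$-sum, and then invokes \cite[Lemma~A.1(ii)--(iii)]{BK}, with separate case analyses for $k\in\{0,1\}$ and for $t\bmod 2$ when $k=1$, to produce the $\gamma$-quotient and the extra linear factor in $P_f$. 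You correctly anticipate this identification as the main work.
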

\begin{proof}
Let $z=\alpha+iy$, $\beta=-1/N\alpha$ and $u=y/\alpha$. Then
$$
-\frac1{Nz}=\frac{\beta}{1+u^2}+i\frac{|\beta u|}{1+u^2},
$$
so that
\begin{align*}
&\left(i\frac{|z|}{z}\right)^k\overline{F}\!\left(-\frac1{Nz}\right)
=\left(i\sgn(\alpha)\frac{|1+iu|}{1+iu}\right)^k
\overline{F}\!\left(\frac{\beta}{1+u^2}+i\frac{|\beta u|}{1+u^2}\right)\\
&=\left(i\sgn(\alpha)\frac{|1+iu|}{1+iu}\right)^k
\sum_{n=1}^\infty\frac{c_{\bar{f}}(n)}{\sqrt{n}}
\left(V_{\bar{f}}^+\!\left(\frac{|\beta nu|}{1+u^2}\right)
\cos\!\left(\frac{2\pi\beta n}{1+u^2}\right)
+iV_{\bar{f}}^-\!\left(\frac{|\beta nu|}{1+u^2}\right)
\sin\!\left(\frac{2\pi\beta n}{1+u^2}\right)\right).
\end{align*}

By Lemma~\ref{lem:Kerror}, for any $\sigma\ge0$ and any $l_0\in\Z_{\ge0}$,
we have
\begin{align*}
V_{\bar{f}}^\pm\!\left(\frac{|\beta nu|}{1+u^2}\right)
&=\sum_{l=0}^\infty
\frac1{l!}(V_{\bar{f}}^\pm)^{(l)}(|\beta nu|)
\left(\frac{\beta nu^3}{1+u^2}\right)^l\\
&=\sum_{l=0}^{l_0-1}\frac1{l!}(V_{\bar{f}}^\pm)^{(l)}(|\beta nu|)
\left(\frac{\beta nu^3}{1+u^2}\right)^l
+O_\sigma\!\left(|\beta nu|^{-\sigma}
\sum_{l=l_0}^\infty\left(\frac{2u^2}{1+u^2}\right)^l
\right)\\
&=\sum_{l=0}^{l_0-1}\frac1{l!}(V_{\bar{f}}^\pm)^{(l)}(|\beta nu|)
\left(\frac{\beta nu^3}{1+u^2}\right)^l
+O_{\alpha,\sigma,l_0}\!\left(|nu|^{-\sigma}u^{2l_0}\right).
\end{align*}
Similarly, for any $a\in\{0,1\}$, we have
\begin{align*}
\cos^{(a)}\!\left(\frac{2\pi\beta n}{1+u^2}\right)
&=\sum_{j=0}^\infty\frac1{j!}\cos^{(j+a)}(2\pi\beta n)
\left(-\frac{2\pi\beta nu^2}{1+u^2}\right)^j\\
&=\sum_{j=0}^{j_0-1}\frac1{j!}\cos^{(j+a)}(2\pi\beta n)
\left(-\frac{2\pi\beta nu^2}{1+u^2}\right)^j
+O\!\left(\frac1{j_0!}\left|\frac{2\pi\beta nu^2}{1+u^2}\right|^{j_0}\right)\\
&=\sum_{j=0}^{j_0-1}\frac1{j!}\cos^{(j+a)}(2\pi\beta n)
\left(-\frac{2\pi\beta nu^2}{1+u^2}\right)^j
+O_{\alpha,j_0}\bigl((nu^2)^{j_0}\bigr),
\end{align*}
by the Lagrange form of the error in Taylor's theorem.
Taking $j_0=2(l_0-l)$ and applying Lemma~\ref{lem:Kerror} with $\sigma$
replaced by $\sigma+2(l_0-l)$, we obtain
\begin{align*}
V_{\bar{f}}^{(-)^a}&\!\left(\frac{|\beta nu|}{1+u^2}\right)
\cos^{(a)}\!\left(\frac{2\pi\beta n}{1+u^2}\right)\\
&=\sum_{j+2l<2l_0}\frac{(-2\pi)^j}{j!l!}
(V_{\bar{f}}^{(-)^a})^{(l)}(|\beta nu|)\cos^{(j+a)}(2\pi\beta n)
u^l\left(\frac{\beta nu^2}{1+u^2}\right)^{j+l}
+O_{\alpha,\sigma,l_0}\bigl(|nu|^{-\sigma}u^{2l_0}\bigr).
\end{align*}

Next, defining
$$
b_{j,k,l,m}=
\begin{cases}
{{j+l-1+\lfloor\frac{m}2\rfloor + \frac{k}2}\choose\lfloor\frac{m}2\rfloor}
&\text{if }k=1\text{ or }k=0\text{ and }2\mid m,\\
0&\text{otherwise},
\end{cases}
$$
we have
\begin{align*}
\left(\frac{|1+iu|}{1+iu}\right)^k(1+u^2)^{-j-l}
&=(1-iu)^k(1+u^2)^{-j-l-\frac{k}2}
=\sum_{m=0}^\infty b_{j,k,l,m}(-iu)^m\\
&=\sum_{m=0}^{m_0-1}b_{j,k,l,m}(-iu)^m
+O\!\left(\sum_{m=m_0}^\infty 2^{j+l+\frac{m}2}|u|^m\right)\\
&=\sum_{m=0}^{m_0-1}b_{j,k,l,m}(-iu)^m
+O_{j,l,m_0}(|u|^{m_0}).
\end{align*}
Taking $m_0=2l_0-j-2l$ and applying Lemma~\ref{lem:Kerror} with
$\sigma$ replaced by $\sigma+j$, we obtain
\begin{align*}
&\left(i\sgn(\alpha)\frac{|1+iu|}{1+iu}\right)^k
V_{\bar{f}}^{(-)^a}\!\left(\frac{|\beta nu|}{1+u^2}\right)
\cos^{(a)}\!\left(\frac{2\pi\beta n}{1+u^2}\right)\\
&=(i\sgn(\alpha))^k\sum_{j+2l+m<2l_0}\frac{(-2\pi)^j(-i)^m}{j!l!}b_{j,k,l,m}
(\beta nu)^{j+l}\bigl(V_{\bar{f}}^{(-)^a}\bigr)^{(l)}(|\beta nu|)
\cos^{(j+a)}(2\pi\beta n)u^{j+2l+m}\\
&\quad+O_{\alpha,\sigma,l_0}\bigl(|nu|^{-\sigma}u^{2l_0}\bigr).
\end{align*}

Recalling the definition of $u$, multplying by $c_{\bar{f}}(n)/\sqrt{n}$
and summing over $n$ and both choices of $a$,
the error term converges if $\sigma\ge1$, to give
\begin{align*}
\sum_{a\in\{0,1\}}i^{-a}
\left(i\frac{|\alpha+iy|}{\alpha+iy}\right)^k
\sum_{n=1}^\infty\frac{c_{\bar{f}}(n)}{\sqrt{n}}
V_{\bar{f}}^{(-)^a}&\!\left(\frac{ny}{N(\alpha^2+y^2)}\right)
\cos^{(a)}\!\left(\frac{2\pi\beta n}{1+(y/\alpha)^2}\right)\\
=\sum_{j+2l+m<2l_0}
(i\sgn(\alpha))^k\sum_{a\in\{0,1\}}i^{-a}
\sum_{n=1}^\infty&\frac{c_{\bar{f}}(n)}{\sqrt{n}}
\frac{(2\pi i)^j}{j!l!}b_{j,k,l,m}
\left(\frac{ny}{N\alpha^2}\right)^{j+l}\\
&\cdot\bigl(V_{\bar{f}}^{(-)^a}\bigr)^{(l)}\!\left(\frac{ny}{N\alpha^2}\right)
\cos^{(j+a)}(2\pi\beta n)\left(\frac{y}{i\alpha}\right)^{j+2l+m}
+O_{\alpha,\sigma,l_0}\bigl(y^{2l_0-\sigma}\bigr)\\
=\sum_{j+2l+m<2l_0}
(i\sgn(\alpha))^k\sum_{a\in\{0,1\}}i^{-a}
\sum_{n=1}^\infty&\frac{c_{\bar{f}}(n)}{\sqrt{n}}
\frac{(-2\pi)^j}{j!l!}b_{j,k,l,m}
\left(\frac{ny}{N\alpha^2}\right)^{j+l}\\
&\cdot\bigl(V_{\bar{f}}^{(-)^{a+j}}\bigr)^{(l)}\!\left(\frac{ny}{N\alpha^2}\right)
\cos^{(a)}(2\pi\beta n)\left(\frac{y}{i\alpha}\right)^{j+2l+m}
+O_{\alpha,\sigma,l_0}\bigl(y^{2l_0-\sigma}\bigr).
\end{align*}

Taking the Mellin transform of a single term of the sum over $j,l,m$
and making the change of variables $y\mapsto N\alpha^2y/n$, we get
\begin{align*}
(i\sgn(\alpha))^k\sum_{a\in\{0,1\}}i^{-a}
\int_0^\infty
\sum_{n=1}^\infty&\frac{c_{\bar{f}}(n)}{\sqrt{n}}
\frac{(-2\pi)^j}{j!l!}b_{j,k,l,m}
\left(\frac{ny}{N\alpha^2}\right)^{j+l}\\
&\cdot\bigl(V_{\bar{f}}^{(-)^{a+j}}\bigr)^{(l)}\!\left(\frac{ny}{N\alpha^2}\right)
\cos^{(a)}(2\pi\beta n)\left(\frac{y}{i\alpha}\right)^{j+2l+m}
y^{s-\frac12}\frac{dy}{y}\\
=(i\sgn(\alpha))^k\sum_{a\in\{0,1\}}&i^{-a}
(N\alpha^2)^{s-\frac12}(-iN\alpha)^{j+2l+m}
\frac{(-2\pi)^j}{j!}b_{j,k,l,m}\\
&\cdot\sum_{n=1}^\infty\frac{c_{\bar{f}}(n)\cos^{(a)}(2\pi\beta n)}{n^{s+j+2l+m}}
\int_0^\infty\frac{y^l}{l!}(V_{\bar{f}}^{(-)^{a+j}})^{(l)}(y)y^{s+2j+2l+m-\frac12}\frac{dy}{y}\\
=(i\sgn(\alpha))^k\sum_{a\in\{0,1\}}&i^{-a}
(N\alpha^2)^{s-\frac12}(-iN\alpha)^t
\frac{(-2\pi)^j}{j!}b_{j,k,l,m}\\
&\cdot D_{\bar{f}}(s+t,\beta,\cos^{(a)})
{{\frac12-s-t-j}\choose{l}}
\widetilde{V}_{\bar{f}}^{(-)^{a+j}}(s+t+j),
\end{align*}
where $t=j+2l+m$.

Next we fix $t\in\Z_{\ge0}$ and sum over all $(j,l,m)$ satisfying $j+2l+m=t$.
When $k=0$, $b_{j,k,l,m}$ vanishes unless $m$ is even.
Hence, defining
$$
I_k(m)=\begin{cases}
1&\text{if }k=1\text{ or }2\mid m,\\
0&\text{otherwise},
\end{cases}
$$
we get
\begin{align*}
(i\sgn(\alpha))^k\sum_{a\in\{0,1\}}i^{-a}
(N\alpha^2)^{s-\frac12}(-iN\alpha)^t
\sum_{j+2l+m=t}&I_k(t-j)
\frac{(-2\pi)^j}{j!}
{{j+l-1+\lfloor\frac{m}2\rfloor+\frac{k}2}\choose\lfloor\frac{m}2\rfloor}\\
&\cdot D_{\bar{f}}(s+t,\beta,\cos^{(a)})
{{\frac12-s-t-j}\choose{l}}
\widetilde{V}_{\bar{f}}^{(-)^{a+j}}(s+t+j)\\
=(i\sgn(\alpha))^k\sum_{a\in\{0,1\}}i^{-a}
(N\alpha^2)^{s-\frac12}(-iN\alpha)^t
\sum_{j=0}^t&I_k(t-j)
\frac{(-2\pi)^j}{j!}
D_{\bar{f}}(s+t,\beta,\cos^{(a)})
\widetilde{V}_{\bar{f}}^{(-)^{a+j}}(s+t+j)\\
&\cdot\sum_{l=0}^{\lfloor\frac{t-j}2\rfloor}
{{j+\lfloor\frac{t-j}2\rfloor+ \frac{k}2-1}\choose\lfloor\frac{t-j}2\rfloor-l}
{{\frac12-s-t-j}\choose{l}}\\
=(i\sgn(\alpha))^k\sum_{a\in\{0,1\}}i^{-a}
(N\alpha^2)^{s-\frac12}(-iN\alpha)^t
\sum_{j=0}^t&I_k(t-j)
\frac{(-2\pi)^j}{j!}
D_{\bar{f}}(s+t,\beta,\cos^{(a)})
\widetilde{V}_{\bar{f}}^{(-)^{a+j}}(s+t+j) \\
&\cdot{{\lfloor\frac{t-j}2\rfloor+\frac{k-1}2-s-t}\choose\lfloor\frac{t-j}2\rfloor},
\end{align*}
by the Chu--Vandermonde identity.

We now break into cases according to the weight, $k$.
When $k=0$, the inner sum vanishes identically when $(-1)^{a+t}=-\epsilon$,
so we may assume that $(-1)^{a+t}=\epsilon$.
Thus, in this case, we have
\begin{align*}
(N\alpha^2)^{s-\frac12}(iN\alpha)^ti^{-a}
D_{\bar{f}}(s+t,\beta,\cos^{(a)})
\sum_{\substack{j\le t\\j\equiv t\pmod*{2}}}
\frac{(2\pi)^j}{j!}
\gamma_{\bar{f}}^{(-)^{a+t}}(s+t+j)
{{\frac{t-j}2-\frac12-s-t}\choose\frac{t-j}2}.
\end{align*}
Put $t=2n+b$, with $b\in\{0,1\}$. Then, writing $j=2r+b$,
the above becomes
\begin{align*}
&(N\alpha^2)^{s-\frac12}(iN\alpha)^ti^{-a}
\Delta_{\bar{f}}(s+t,\beta,\cos^{(a)})\\
&\qquad\cdot\sum_{r=0}^n
\frac{(2\pi)^{2r+b}}{(2r+b)!}
\frac{\Gamma_\R(s+t+2r+b+\nu)\Gamma_\R(s+t+2r+b-\nu)}
{\Gamma_\R(s+t+b+\nu)\Gamma_\R(s+t+b-\nu)}
{{n-r-\frac12-s-t}\choose{n-r}}\\
&=(N\alpha^2)^{s-\frac12}(iN\alpha)^ti^{-a}
\Delta_{\bar{f}}(s+t,\beta,\cos^{(a)})(-1)^n \\
&\qquad\cdot\sum_{r=0}^n
\left(\frac{2\pi}{2r+1}\right)^b
\frac{(-4)^rr!^2}{(2r)!}
{{-(s+t+b+\nu)/2}\choose{r}}
{{-(s+t+b-\nu)/2}\choose{r}}
{{s+t-\frac12}\choose{n-r}}.
\end{align*}
Applying \cite[Lemma~A.1(ii)--(iii)]{BK}, we get
\begin{align*}
&(N\alpha^2)^{s-\frac12}(iN\alpha)^ti^{-a}
\Delta_{\bar{f}}(s+t,\beta,\cos^{(a)})\\
&\qquad\cdot\left(\frac{2\pi}{2n+1}\right)^b
\frac{4^nn!^2}{(2n)!}
{{(s+t-1-b+\nu)/2}\choose{n}}
{{(s+t-1-b-\nu)/2}\choose{n}}\\
&\quad=(N\alpha^2)^{s-\frac12}\frac{(2\pi iN\alpha)^t}{t!}i^{-a}
\frac{\gamma_{f}^{(-)^{a+t}}(1-s)}{\gamma_{f}^{(-)^{a+t}}(1-s-2n)}
\Delta_{\bar{f}}(s+t,\beta,\cos^{(a)}).
\end{align*}

Turning to $k=1$, we have
\begin{align*}
 i\sgn(\alpha)
(N\alpha^2)^{s-\frac12}(-iN\alpha)^t
\sum_{a\in\{0,1\}}i^{-a}
\sum_{j=0}^t
&\frac{(-2\pi)^j}{j!}
D_{\bar{f}}(s+t,\beta,\cos^{(a)})\\
&\cdot\gamma_{\bar{f}}^{(-)^{a+j}}(s+t+j)
{{\lfloor\frac{t-j}2\rfloor -s-t}\choose\lfloor\frac{t-j}2\rfloor}\\
= i\sgn(\alpha)
(N\alpha^2)^{s-\frac12}(-iN\alpha)^t
\sum_{a\in\{0,1\}}i^{-a}
&D_{\bar{f}}(s+t,\beta,\cos^{(a)})\\
&\cdot\sum_{j=0}^t\frac{(-2\pi)^j}{j!}
\gamma_{\bar{f}}^{(-1)^{a+j}}(s+t+j)
{{\lfloor\frac{t-j}2\rfloor-s-t}\choose\lfloor\frac{t-j}2\rfloor}.
\end{align*}
Writing $j=2r-c$ with $c\in\{0,1\}$, this is 
\begin{align*}
&i\sgn(\alpha)(N\alpha^2)^{s-\frac12}(-iN\alpha)^t
\sum_{a\in\{0,1\}}i^{-a}\Delta_{\bar{f}}(s+t,\beta,\cos^{(a)})
\sum_{c\in\{0,1\}}\sum_{2r-c\le t}\frac{(-2\pi)^{2r-c}}{(2r-c)!}
{{n-r+\lfloor\frac{b+c}2\rfloor-s-t}\choose n-r+\lfloor\frac{b+c}2\rfloor}\\
&\quad\cdot\frac{\Gamma_\R\!\left(s+t+2r-c+\frac{1-(-1)^{a+c}\epsilon}2+\nu\right)
\Gamma_\R\!\left(s+t+2r-c+\frac{1+(-1)^{a+c}\epsilon}2-\nu\right)}
{\Gamma_\R\!\left(s+t+\frac{1-(-1)^a\epsilon}2+\nu\right)
\Gamma_\R\!\left(s+t+\frac{1+(-1)^a\epsilon}2-\nu\right)}\\
&=i\sgn(\alpha)(N\alpha^2)^{s-\frac12}(-iN\alpha)^t
\sum_{a\in\{0,1\}}i^{-a}\Delta_{\bar{f}}(s+t,\beta,\cos^{(a)})
\sum_{c\in\{0,1\}}(-1)^{n+bc}\\
&\quad\cdot\sum_{2r-c \leq t} \frac{(-4)^rr!^2}{(2r)!}
{{-(s+t+\frac{1-(-1)^a\epsilon}2+\nu)/2}\choose
{r-c\frac{1-(-1)^a\epsilon}2}}
{{-(s+t+\frac{1+(-1)^a\epsilon}2-\nu)/2}\choose
{r-c\frac{1+(-1)^a\epsilon}2}}
{{s+t-1}\choose n+bc-r}.
\end{align*}
For $b=0$, applying \cite[Lemma~A.1(ii)]{BK}, the sum over $c$ becomes
\begin{align*}
(-1)^n&\sum_{r=0}^n\frac{(-4)^rr!^2}{(2r)!}
{{-(s+t-1+\frac{1-(-1)^a\epsilon}2-\nu)/2}
\choose{r}}
{{-(s+t-1+\frac{1+(-1)^a\epsilon}2+\nu)/2}
\choose{r}}
{{s+t-1}\choose n-r}\\
&=\frac{4^nn!^2}{(2n)!}
{{(s+2n-2+\frac{1-(-1)^a\epsilon}2-\nu)/2}
\choose{n}}
{{(s+2n-2+\frac{1+(-1)^a\epsilon}2+\nu)/2}
\choose{n}}\\
&=\frac{(-2\pi)^{2n}}{(2n)!}
\frac{\Gamma_\R(1-s+\frac{1+(-1)^a\epsilon}2+\nu)}
{\Gamma_\R(1-s-2n+\frac{1+(-1)^a\epsilon}2+\nu)}
\frac{\Gamma_\R(1-s+\frac{1-(-1)^a\epsilon}2-\nu)}
{\Gamma_\R(1-s-2n+\frac{1-(-1)^a\epsilon}2-\nu)}\\
&=\frac{(-2\pi)^t}{t!}
\frac{\gamma_{f}^{(-)^{a}}(1-s)}{\gamma_{f}^{(-)^a}(1-s-2n)}
=\frac{(-2\pi)^t}{t!}
\frac{\gamma_{f}^{(-)^{a+t}}(1-s)}{\gamma_{f}^{(-)^{a+t}}(1-s-2\lfloor t/2\rfloor)}.
\end{align*}
For $b=1$ and $c=0$, the inner sum is
$$
(-1)^n\sum_{r=0}^n\frac{(-4)^r r!^2}{(2r)!}
{{-(s+t+\frac{1-(-1)^a\epsilon}{2}+\nu)/2}\choose{r}}
{{-(s+t+\frac{1+(-1)^a\epsilon}{2}-\nu)/2}\choose{r}}
{{s+t-1}\choose n-r}.
$$
Writing
${{s+t-1}\choose n-r}={{s+t} \choose{n-r+1}} -{{s+t-1}\choose{n-r+1}}$
and applying \cite[Lemma~A.1(ii)]{BK}, we get
\begin{align*}
&(-1)^n\left[ \frac{(-4)^{n+1}(n+1)!^2}{(2n+2)!}
{{(s+t-\frac{1+(-1)^a\epsilon}{2}+\nu)/2}\choose{n+1}}{{(s+t-\frac{1-(-1)^a\epsilon}{2}-\nu)/2}\choose{n+1}} \right. \\
&\qquad\qquad\left.-\frac{(-4)^{n+1}(n+1)!^2}{(2n+2)!}
{{-(s+t+\frac{1-(-1)^a\epsilon}{2}+\nu)/2}\choose{n+1}}
{{-(s+t+\frac{1+(-1)^a\epsilon}{2}-\nu)/2}\choose{n+1}}\right]\\
&\quad+(-1)^{n+1}\left[\sum_{r=0}^{n+1}\frac{(-4)^rr!^2}{(2r)!}
{{-(s+t+\frac{1-(-1)^a\epsilon}{2}+\nu)/2}\choose{r}}
{{-(s+t+\frac{1+(-1)^a\epsilon}{2}-\nu)/2}\choose{r}}
{{s+t-1}\choose{n-r+1}}\right.\\
&\qquad\qquad\left.-\frac{(-4)^{n+1}(n+1)!^2}{(2n+2)!}
{{-(s+t+\frac{1-(-1)^a\epsilon}{2}+\nu)/2}\choose{n+1}}
{{-(s+t+\frac{1+(-1)^a\epsilon}{2}-\nu)/2}\choose{n+1}}\right]\\
&=(-1)^n\frac{(-4)^{n+1}(n+1)!^2}{(2n+2)!}
{{(s+t-1+(-1)^a\epsilon\nu)/2}\choose{n+1}}
{{(s+t-(-1)^a\epsilon\nu)/2}\choose{n+1}}\\
&\quad+(-1)^{n+1}\sum_{r=0}^{n+1}\frac{(-4)^rr!^2}{(2r)!}
{{-(s+t-(-1)^a\epsilon\nu+1)/2}\choose{r}}
{{-(s+t+(-1)^a\epsilon\nu)/2}\choose{r}}
{{s+t-1} \choose{n-r+1}}.
\end{align*}
For $b=1$ and $c=1$ the inner sum is
\begin{align*}
(-1)^{n+1}&
\sum_{r=1}^{n+1}\frac{(-4)^rr!^2}{(2r)!}
{{-(s+t-(-1)^a\epsilon\nu +1)/2}\choose{r-1}}
{{-(s+t +(-1)^a\epsilon\nu)/2}\choose{r}}
{{s+t-1}\choose n+1-r},
\end{align*}
and adding this to the contribution from $c=0$, for $b=1$ we obtain
\begin{align*}
&(-1)^n\frac{(-4)^{n+1}(n+1)!^2}{(2n+2)!}
{{(s+t-1+(-1)^a\epsilon\nu)/2}\choose{n+1}}
{{(s+t-(-1)^a\epsilon\nu)/2}\choose{n+1}}\\
&\quad+(-1)^{n+1}\left[{{s+t-1}\choose{n+1}}
+\sum_{r=1}^{n+1}\frac{(-4)^rr!^2}{(2r)!}
{{1-(s+t-(-1)^a\epsilon\nu+1)/2}\choose{r}}
{{-(s+t+(-1)^a\epsilon \nu )/2}\choose{r}}
{{s+t-1}\choose{n-r+1}}\right]\\
&=(-1)^n\frac{(-4)^{n+1}(n+1)!^2}{(2n+2)!}
{{(s+t-1+(-1)^a\epsilon\nu)/2}\choose{n+1}}
{{(s+t-(-1)^a\epsilon\nu)/2}\choose{n+1}}\\
&\quad+(-1)^{n+1}\sum_{r=0}^{n+1}\frac{(-4)^rr!^2}{(2r)!}
{{1-(s+t-(-1)^a\epsilon\nu+1)/2}\choose{r}}
{{-(s+t+(-1)^a\epsilon\nu)/2}\choose{r}}
{{s+t-1}\choose{n-r+1}}.
\end{align*}
Applying \cite[Lemma~A.1(ii)]{BK}, this is
\begin{align*}
&-\frac{4^{n+1}(n+1)!^2}{(2n+2)!}
{{(s+t-1+(-1)^a\epsilon\nu)/2}\choose{n+1}}
\left[{{(s+t-(-1)^a\epsilon\nu)/2}\choose{n+1}}
-{{(s+t-(-1)^a\epsilon\nu)/2-1}\choose{n+1}}\right]\\ 
&=-\frac{4^{n+1}(n+1)!^2}{(2n+2)!}
\frac{(s+(-1)^a\epsilon\nu+2n)/2}{n+1}
{{(s+2n-2+\frac{1+(-1)^a\epsilon}{2}-\nu)/2}\choose{n}}
{{(s+2n-2+\frac{1-(-1)^a\epsilon}{2}+\nu)/2}\choose{n}}\\
&=\frac{s+2\lfloor t/2\rfloor-(-1)^{a+t}\epsilon\nu}{2\pi}\frac{(-2\pi)^t}{t!}
\frac{\gamma_{f}^{(-)^{a+t}}(1-s)}
{\gamma_{f}^{(-)^{a+t}}(1-s-2\lfloor t/2\rfloor)}.
\end{align*}

In all cases, the result matches the formula for $P_f(s;a+t,t)$.
Taking $l_0=\lceil{T/2}\rceil$, $\sigma=1$ and applying Mellin
inversion, we get \eqref{eq:taylor}, with $T+1$ in place of $T$ when $T$
is odd. In that case, we estimate the final term of the sum
by shifting the contour to $\Re(s)=\frac32-T$, which yields $O(y^{T-1})$.
\end{proof}

\begin{Lem}\label{lem:Bseries}
Assume that $\Lambda_f(s)$ has at most finitely many simple zeros, and
let $\alpha\in\Q^\times$ and $z=\alpha+iy$ for some $y\in(0,|\alpha|/4]$.
Then there are numbers $a_j(\alpha),b_j(\alpha)\in\C$ such that,
for any integer $M\geq0$, we have
\begin{equation}\label{eq:Bseries}
B(\alpha+iy)=
O_{\alpha,f,M}(y^M)
+\sum_{j=0}^{M-1}y^{j+\frac12}
\begin{cases}
a_j(\alpha)+b_j(\alpha)\log{y}&\text{if }\nu=k=0,\\
a_j(\alpha)y^\nu+b_j(\alpha)y^{-\nu}&\text{otherwise}.
\end{cases}
\end{equation}
\end{Lem}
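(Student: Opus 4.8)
The plan is to treat the two terms in the definition \eqref{eq:Bdef} of $B(z)$ separately, in each case substituting the large-argument asymptotics of the hypergeometric factor $H_f(s,\w)=G_f(s,\w)/\gamma_f^+(s)$ as $\w=\alpha/y\to\infty$. Writing $z=\alpha+iy$ and $\w=\alpha/y$, the main input is the connection formula for the Gauss hypergeometric function $\F$ at $z=\infty$, applied to each factor in \eqref{eq:Gdef}. Since $\F(a,b;c;-\w^2)$ expands as $\w^{-2a}$ and $\w^{-2b}$ times convergent power series in $\w^{-2}$, and since here $a-b=\pm\nu$ is a fixed constant, one obtains for suitable coefficient functions $c_j^{\pm}(s)$
$$
H_f(s,\alpha/y)=\sum_{j=0}^{J-1}\Bigl(c_j^{+}(s)\,(y/\alpha)^{s+\nu+j}+c_j^{-}(s)\,(y/\alpha)^{s-\nu+j}\Bigr)+(\text{error of size }(y/\alpha)^{\Re(s)+J}).
$$
For $k=0$ only even $j$ occur (the $\w$-dependence enters through $\w^{(1-\epsilon)/2}\F(\,\cdots;-\w^2)$, whose leading exponents differ by multiples of $2$), while for $k=1$ all $j$ occur because of the extra factor $\w$ in \eqref{eq:Gdef}; in either case this is a subset of the powers allowed by \eqref{eq:Bseries}. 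Multiplying by $y^{\frac12-s}$ turns each term into $y^{\frac12\pm\nu+j}$ times a quantity whose $s$-dependence sits entirely in $\alpha^{-s}$ and $c_j^{\pm}(s)$.

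First I would dispose of the finite sum $-\sum_\rho\Lambda_f'(\rho)H_f(\rho,\alpha/y)y^{\frac12-\rho}$. By hypothesis there are only finitely many simple zeros $\rho$, so the expansion above applies termwise for each $\rho$; truncating at $j=J-1$ with $J=J(M)$ large and using $|\Re(\nu)|<\tfrac12$ together with $\Re(\rho)\in(0,1)$, each surviving term has the shape $\alpha^{-\rho\mp\nu-j}\,y^{\frac12\pm\nu+j}$ and the tail is $O_{\alpha,f,M}(y^M)$. Summing the finitely many $\rho$ collects the coefficients into numbers $a_j(\alpha)y^\nu+b_j(\alpha)y^{-\nu}$ times $y^{j+\frac12}$, exactly as in \eqref{eq:Bseries}.

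For the integral $\frac1{2\pi i}\int_{\Re(s)=\frac12}X_f(s)\Lambda_f(s)H_f(s,\alpha/y)y^{\frac12-s}\,ds$ I would substitute the same expansion of $H_f(s,\alpha/y)$ and integrate term by term. Each term contributes $y^{\frac12\pm\nu+j}$ times the fixed integral $\frac1{2\pi i}\int_{\Re(s)=\frac12}X_f(s)\Lambda_f(s)c_j^{\pm}(s)\,\alpha^{-s\mp\nu-j}\,ds$, which I fold into $a_j(\alpha),b_j(\alpha)$. Absolute convergence is the point to check: along $\Re(s)=\tfrac12$ the factor $X_f(s)$ (a sum of $\csc^2$'s) decays like $e^{-\pi|t|}$ and $\gamma_f^+(s)$ like $e^{-\pi|t|/2}$, while the Gamma-ratios defining $c_j^{\pm}(s)$ grow only like $e^{\pi|t|/2}$; the product therefore decays like $e^{-\pi|t|}$ times a polynomial, so each coefficient integral converges. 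The remainder after $J$ terms is handled by combining the uniform hypergeometric error with this net exponential decay of $X_f\Lambda_f$, and choosing $J$ so that the resulting $y$-power exceeds $M$ gives the $O_{\alpha,f,M}(y^M)$ bound.

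Finally, the degenerate case $\nu=k=0$ is precisely where the two hypergeometric parameters coincide, $a-b=\nu=0$, and the connection formula acquires a logarithmic term; here $\log\w=\log\alpha-\log y$ produces the $a_j(\alpha)+b_j(\alpha)\log y$ shape in place of $a_j(\alpha)y^\nu+b_j(\alpha)y^{-\nu}$, while for $k=1$ the parameter differences equal $\tfrac{\epsilon}2\pm\nu\notin\Z$ and no logarithm arises. I expect the main obstacle to be establishing the hypergeometric expansion of $H_f(s,\w)$ uniformly in $s$ along the contour, with error terms controlled well enough (against the decay of $X_f\Lambda_f$) to justify termwise integration and the $O(y^M)$ remainder; once that uniformity is secured, the rest is bookkeeping of the finitely many zeros and of the two families of $y$-powers.
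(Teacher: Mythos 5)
Your proposal is correct and follows essentially the same route as the paper: split $B$ into the finite sum over simple zeros and the contour integral, expand $H_f(s,\alpha/y)$ via the hypergeometric connection formula at infinity (the paper uses \cite[9.132(2)]{GR}) to get the powers $y^{j+\frac12\pm\nu}$ with the logarithm appearing exactly in the degenerate case $\nu=k=0$, and justify termwise integration by balancing the $e^{-(3\pi/2-\varepsilon)|s|}$ decay of $X_f(s)\Lambda_f(s)$ against the controlled exponential growth of the expansion coefficients in $s$. The only differences are matters of execution (the paper proves a convergent full expansion with the explicit coefficient bound \eqref{eq:abestimate} and handles the $\nu=0$ limit via $(y^{\pm\nu}-1)+1$ and the Cauchy integral formula, rather than invoking the logarithmic connection formula directly), not of substance.
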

\begin{proof}
Let $s\in\C$ with $\Re(s)\in(0,1)$, and set $\w=\alpha/y$.
We will show that there are
numbers $a_j(\alpha,s),b_j(\alpha,s)\in\C$ satisfying
\begin{equation}\label{eq:Gseries}
H_f(s,\w)y^{\frac12-s}
=\sum_{j=0}^\infty y^{j+\frac12}
\begin{cases}
a_j(\alpha,s)+b_j(\alpha,s)\log{y}&\text{if }\nu=k=0,\\
a_j(\alpha,s)y^\nu+b_j(\alpha,s)y^{-\nu}&\text{otherwise}
\end{cases}
\end{equation}
and
\begin{equation}\label{eq:abestimate}
a_j(\alpha,s),b_j(\alpha,s)
\ll_{f,\alpha,\varepsilon}(2e^{\pi/2})^{(1+\varepsilon)|s|}
|2/\alpha|^{j+\frac12}\sqrt{j+1},
\quad\text{for all }\varepsilon>0.
\end{equation}

Let us assume this for now. Then, since $y\le|\alpha|/4$, we have
$$
\sum_{j=M}^\infty\left(\frac{2y}{|\alpha|}\right)^{j+\frac12}\sqrt{j+1}
\ll_{\alpha,M}y^{M+\frac12},
$$
so that (by the trivial estimate $|\Re(\nu)|<\frac12$),
\begin{equation}\label{eq:GseriesM}
H_f(s,\w)y^{\frac12-s}
=O_{f,\alpha,M,\varepsilon}((2e^{\pi/2})^{(1+\varepsilon)|s|}y^M)
+\sum_{j=0}^{M-1}y^{j+\frac12}
\begin{cases}
a_j(\alpha,s)+b_j(\alpha,s)\log{y}&\text{if }\nu=k=0,\\
a_j(\alpha,s)y^\nu+b_j(\alpha,s)y^{-\nu}&\text{otherwise}.
\end{cases}
\end{equation}
We substitute this expansion into \eqref{eq:Bdef}.
By hypothesis, $\Lambda_f(s)$ has at most finitely many simple zeros, so
the sum over $\rho$ in \eqref{eq:Bdef} is a finite linear combination of
the series \eqref{eq:GseriesM} with $s=\rho$, which yields an expansion
of the shape \eqref{eq:Bseries}.
As for the integral term in \eqref{eq:Bdef}, by the convexity
bound and Stirling's formula, we have
$$
X_f(s)\Lambda_f(s)\ll_{f,\varepsilon}e^{-(3\pi/2-\varepsilon)|s|}
\quad\text{for }\Re(s)=\tfrac12, \varepsilon>0.
$$
Since $2<e^\pi$, the integral converges absolutely
and again yields something of the shape \eqref{eq:Bseries}.

It remains to show \eqref{eq:Gseries} and \eqref{eq:abestimate}.
First suppose that $k=0$. Then, by \eqref{eq:Gdef}, we have
$$
H_f(s,\w)y^{\frac12-s}=|\alpha/\w|^{\frac12-s}
(2\pi i\w)^{\frac{1-\epsilon}2}
\F\!\left(\frac{s+\frac{1-\epsilon}2+\nu}2,\frac{s+\frac{1-\epsilon}2-\nu}2;
1-\frac{\epsilon}2;-\w^2\right).
$$
Applying the hypergeometric transformation \cite[9.132(2)]{GR} and the
defining series \eqref{eq:2F1def}, this is
\begin{equation}\label{eq:euler}
\begin{aligned}
&(\pi i\sgn(\alpha))^{\frac{1-\epsilon}2}
|\alpha|^{\frac12-s}\pi^{\frac12}\sum_\pm
\frac{|y/\alpha|^{\frac12\pm\nu}\Gamma(\mp\nu)}
{\Gamma\!\left(1-\frac{s+\frac{1+\epsilon}{2}\pm\nu}{2}\right)
\Gamma\!\left(\frac{s+\frac{1-\epsilon}{2}\mp\nu}{2}\right)}
\F\!\left(\frac{s+\frac{1-\epsilon}{2}\pm\nu}{2},
\frac{s+\frac{1+\epsilon}{2}\pm\nu}{2};
1\pm\nu;-\left(\frac{y}{\alpha}\right)^2\right)\\
&=(\pi i\sgn(\alpha))^{\frac{1-\epsilon}2}
|\alpha|^{\frac12-s}\pi^{\frac12}
\sum_{j=0}^\infty
\sum_\pm
\frac{\Gamma(\mp\nu)}
{\Gamma\!\left(1-\frac{s+\frac{1+\epsilon}{2}\pm\nu}{2}\right)
\Gamma\!\left(\frac{s+\frac{1-\epsilon}{2}\mp\nu}{2}\right)}
\frac{{{-\frac{s+\frac{1-\epsilon}{2}\pm\nu}{2}}\choose{j}}
{{-\frac{s+\frac{1+\epsilon}{2}\pm\nu}{2}}\choose{j}}}
{{{-1\mp\nu}\choose{j}}}
\left|\frac{y}{\alpha}\right|^{2j+\frac12\pm\nu}.
\end{aligned}
\end{equation}
To pass from this to \eqref{eq:Gseries}, we
replace $2j$ by $j$ and set $a_j=b_j=0$ when $j$ is odd.

When $\nu\ne0$ we use the estimates
$$
\left|{{-\frac{s+a\pm\nu}{2}}\choose{j}}\right|
=\left|{{\frac{s+a\pm\nu}{2}+j-1}\choose{j}}\right|
\le 2^{|s+a\pm\nu|/2+j}\ll_f 2^{|s|/2+j}
\quad\text{for }a\in\{0,1\},
$$
$$
\left|{{-1\mp\nu}\choose{j}}\right|
=\prod_{l=1}^j\left|1\pm\frac{\nu}{l}\right|
\ge\prod_{l=1}^j\left|1-\frac1{2l}\right|
=\left|{{-\frac12}\choose{j}}\right|\gg\frac1{\sqrt{2j+1}}
$$
and
$$
(\pi i\sgn(\alpha))^{\frac{1-\epsilon}2}
|\alpha|^{\frac12-s}\pi^{\frac12}
\frac{\Gamma(\mp\nu)}
{\Gamma\!\left(1-\frac{s+\frac{1+\epsilon}{2}\pm\nu}{2}\right)
\Gamma\!\left(\frac{s+\frac{1-\epsilon}{2}\mp\nu}{2}\right)}
\ll_{f,\varepsilon}e^{(\pi/2+\varepsilon)|s|}
\quad\text{for all }\varepsilon>0
$$
to obtain \eqref{eq:abestimate}.

When $\nu=0$, \eqref{eq:euler} has a singularity arising from the
$\Gamma(\pm\nu)$ factors, but we can still understand the formula by
analytic continuation. To remove the singularity, we replace
$y^{\pm\nu}$ by $(y^{\pm\nu}-1)+1$. Since
$$
\lim_{\nu\to0}\Gamma(\pm\nu)(y^{\pm\nu}-1)=\log{y},
$$
in the terms with $y^{\pm\nu}-1$ we can simply take the limit and
estimate the remaining factors as before; this gives the $b_j$
terms in \eqref{eq:Gseries} and \eqref{eq:abestimate}. The terms with
$1$ can be written in the form $y^{2j+\frac12}(h_j(\nu)+h_j(-\nu))$,
where $h_j$ is meromorphic with a simple pole at $\nu=0$,
and independent of $y$. Then $h_j(\nu)+h_j(-\nu)$ is even, so it has a
removable singularity at $\nu=0$. By the Cauchy integral formula, we have
$$
\lim_{\nu\to0}(h_j(\nu)+h_j(-\nu))
=\frac1{2\pi i}\int_{|\nu|=\frac12}\frac{h_j(\nu)+h_j(-\nu)}{\nu}\,d\nu.
$$
Since the above estimates hold uniformly for $\nu\in\C$ with
$|\nu|=\frac12$, they also hold for $\lim_{\nu\to0}(h_j(\nu)+h_j(-\nu))$.
This concludes the proof of \eqref{eq:Gseries} and \eqref{eq:abestimate}
when $k=0$.

Turning to $k=1$, by \eqref{eq:Gdef} we have
\begin{align*}
H_f(s,\w)y^{\frac12-s}=
\sum_{\delta\in\{0,1\}}
&\left|\frac{\alpha}{\w}\right|^{\frac12-s}(i\w(s-\epsilon\nu))^\delta\\
&\cdot\F\!\left(\frac{s+(-1)^\delta\frac{1+\epsilon}2+\nu}{2}+\delta,
\frac{s+(-1)^\delta\frac{1-\epsilon}2-\nu}{2}+\delta;\frac12+\delta;
-\w^2\right),
\end{align*}
and applying \cite[9.132(2)]{GR}, this becomes
\begin{align*}
\pi^{\frac12}|\alpha|^{\frac12-s}\sum_{\delta\in\{0,1\}}
&\left(\frac{i\sgn(\alpha)(s-\epsilon\nu)}2\right)^\delta\sum_\pm
\left|\frac{y}{\alpha}\right|^{\frac12+\frac{1\pm(-1)^\delta\epsilon}2\pm\nu}
\frac{\Gamma\bigl(\mp(\nu+(-1)^\delta\frac{\epsilon}2)\bigr)}
{\Gamma\bigl(\frac{s+(-1)^\delta\frac{1\mp\epsilon}2\mp\nu}2+\delta\bigr)
\Gamma\bigl(\frac12-\frac{s+(-1)^\delta\frac{1\pm\epsilon}2\pm\nu}2\bigr)}\\
&\cdot\F\!\left(
\frac{s+(-1)^\delta\frac{1\pm\epsilon}2\pm\nu}2+\delta,
\frac{s+(-1)^\delta\frac{1\pm\epsilon}2\pm\nu}2+\frac12;
1\pm\left(\nu+(-1)^\delta\frac{\epsilon}2\right);
-\left(\frac{y}{\alpha}\right)^2\right).
\end{align*}
In this case no singularity arises
from the $\Gamma$-factor in the numerator, so
expanding the final $\F$ as a series and applying a similar analysis to
the above, we arrive at \eqref{eq:Gseries} and \eqref{eq:abestimate}.
\end{proof}

With the lemmas in place, we can now complete the proof of
Proposition~\ref{prop:main}. Let
$$
\chi_{(0,\frac{|\alpha|}4]}(y)=
\begin{cases}
1&\text{if }y\le\frac{|\alpha|}4,\\
0&\text{if }y>\frac{|\alpha|}4,
\end{cases}
$$
and define
\begin{align*}
g(y)&=F(\alpha+iy)+A(\alpha+iy)
-\chi_{(0,\frac{|\alpha|}{4}]}(y)\sum_{j=0}^{M-1}y^{j+\frac12}
\begin{cases}
a_j(\alpha)+b_j(\alpha)\log{y}&\text{if }\nu=k=0,\\
a_j(\alpha)y^\nu+b_j(\alpha)y^{-\nu}&\text{otherwise }
\end{cases}\\
&-\eta(i\sgn(\alpha))^k\sum_{t=0}^{M-1}
\frac{(2\pi iN\alpha)^t}{t!}\sum_{a\in\{0,1\}}\frac{i^{-a}}{2\pi i}
\int_{\Re(s)=2}P_f(s;a+t,t)\Delta_{\bar{f}}\!\left(
s+t,-\frac1{N\alpha},\cos^{(a)}\right)
\left(\frac{y}{N\alpha^2}\right)^{\frac12-s}\,ds.
\end{align*}
By Lemmas~\ref{lem:FEofF}, \ref{lem:dualside} and \ref{lem:Bseries},
we have $g(y)=O_{\alpha,M}(y^{M-1})$ for
$y\le|\alpha|/4$. On the other hand,
shifting the contour of the above to the right, we see
that $g$ decays rapidly as $y\to\infty$.
Hence, $\int_0^\infty g(y)y^{s-\frac12}\frac{dy}{y}$
converges absolutely and defines a holomorphic function for
$\Re(s)>\frac52-M$. 

We have
\begin{align*}
\int_0^\infty F(\alpha+iy)y^{s-\frac 12}\frac{dy}{y}=
\sum_{a\in\{0,1\}}i^{-a}\Delta_f\bigl(s,\alpha,\cos^{(a)}\bigr)
\begin{cases}
1&\text{if }k=1\text{ or }(-1)^a=\epsilon,\\
0&\text{otherwise.}
\end{cases}
\end{align*}
By Lemma~\ref{lem:AMellin},
$\int_0^\infty A(\alpha+iy)y^{s-\frac 12}\frac{dy}y$
continues to a holomorphic function on $\Omega$.
Similarly,
\begin{align*}
\int_0^\infty&y^{s-\frac12}\frac{dy}{y}\cdot
\chi_{(0,\frac{|\alpha|}{4}]}(y)\sum_{j=0}^{M-1}y^{j+\frac12}
\begin{cases}
a_j(\alpha)+b_j(\alpha)\log{y}&\text{if }\nu=k=0,\\
a_j(\alpha)y^\nu+b_j(\alpha)y^{-\nu}&\text{otherwise}
\end{cases}\\
&=\sum_{j=0}^{M-1}
\begin{cases}
\frac{|\alpha/4|^{s+j}}{s+j}\left[a_j(\alpha)
+b_j(\alpha)\left(\log|\alpha/4|-\frac1{s+j}\right)\right]
&\text{if }\nu=k=0,\\
a_j(\alpha)\frac{|\alpha/4|^{s+j+\nu}}{s+j+\nu}
+b_j(\alpha)\frac{|\alpha/4|^{s+j-\nu}}{s+j-\nu}
&\text{otherwise}
\end{cases}
\end{align*}
is holomorphic on $\Omega$. Hence, by Mellin inversion,
\begin{equation}\label{eq:gmellin}
\begin{aligned}
&\sum_{a\in\{0,1\}}i^{-a}\Delta_f\bigl(s,\alpha,\cos^{(a)}\bigr)
\begin{cases}
1&\text{if }k=1\text{ or }(-1)^a=\epsilon,\\
0&\text{otherwise}
\end{cases}\\
&-\eta(i\sgn(\alpha))^k(N\alpha^2)^{s-\frac12}
\sum_{t=0}^{M-1}
\frac{(2\pi iN\alpha)^t}{t!}\sum_{a\in\{0,1\}}i^{-a}
P_f(s;a+t,t)\Delta_{\bar{f}}\!\left(
s+t,-\frac1{N\alpha},\cos^{(a)}\right)
\end{aligned}
\end{equation}
is holomorphic on $\{s\in\Omega:\Re(s)>\frac52-M\}$.

Denoting \eqref{eq:gmellin} by $h(\alpha)$, we consider the combination
$\frac12(i^{k+a_0}h(\alpha)+i^{-k-a_0}h(-\alpha))$ for some $a_0\in\{0,1\}$.
This picks out the term with $a\equiv k+a_0\pmod*{2}$ in the first sum
over $a$, and $a\equiv t+a_0\pmod*{2}$ in the second. Therefore, since
$$
P_f(s;a_0,0)=\begin{cases}
1&\text{if }k=1\text{ or }(-1)^{a_0}=\epsilon,\\
0&\text{otherwise},
\end{cases}
$$
we find that
\begin{equation}\label{eq:diff}
\begin{aligned}
P_f(s;a_0,0)&\Delta_f\bigl(s,\alpha,\cos^{(k+a_0)}\bigr)\\
&-\eta(-\sgn(\alpha))^k(N\alpha^2)^{s-\frac12}
\sum_{t=0}^{M-1}
\frac{(2\pi N\alpha)^t}{t!}
P_f(s;a_0,t)\Delta_{\bar{f}}\!\left(
s+t,-\frac1{N\alpha},\cos^{(t+a_0)}\right)
\end{aligned}
\end{equation}
is holomorphic on $\{s\in\Omega:\Re(s)>\frac52-M\}$.
Finally, replacing $M$ by $M+1$ and discarding the final term of the
sum, we see that \eqref{eq:diff} is holomorphic on
$\{s\in\Omega:\Re(s)>\frac32-M\}$, as required.

\bibliographystyle{amsplain}
\bibliography{BCK}

\providecommand{\bysame}{\leavevmode\hbox to3em{\hrulefill}\thinspace}
\providecommand{\MR}{\relax\ifhmode\unskip\space\fi MR }
\providecommand{\MRhref}[2]{%
  \href{http://www.ams.org/mathscinet-getitem?mr=#1}{#2}
}
\providecommand{\href}[2]{#2}
\begin{thebibliography}{10}

\bibitem{Booker}
Andrew~R. Booker, \emph{Simple zeros of degree 2 {$L$}-functions}, J. Eur.
  Math. Soc. (JEMS) \textbf{18} (2016), no.~4, 813--823. \MR{3474457}

\bibitem{BK}
Andrew~R. Booker and M.~Krishnamurthy, \emph{A strengthening of the {${\rm
  GL}(2)$} converse theorem}, Compos. Math. \textbf{147} (2011), no.~3,
  669--715. \MR{2801397}

\bibitem{BT}
Andrew~R. Booker and Holger Then, \emph{Rapid computation of {$L$}-functions
  attached to {M}aass forms}, Int. J. Number Theory \textbf{14} (2018), no.~5,
  1459--1485. \MR{3806315}

\bibitem{Cho}
Peter~Jaehyun Cho, \emph{Simple zeros of {M}aass {$L$}-functions}, Int. J.
  Number Theory \textbf{9} (2013), no.~1, 167--178. \MR{2997497}

\bibitem{CG}
J.~B. Conrey and A.~Ghosh, \emph{Simple zeros of the {R}amanujan
  {$\tau$}-{D}irichlet series}, Invent. Math. \textbf{94} (1988), no.~2,
  403--419. \MR{958837}

\bibitem{DFI}
W.~Duke, J.~B. Friedlander, and H.~Iwaniec, \emph{The subconvexity problem for
  {A}rtin {$L$}-functions}, Invent. Math. \textbf{149} (2002), no.~3, 489--577.
  \MR{1923476}

\bibitem{GR}
I.~S. Gradshteyn and I.~M. Ryzhik, \emph{Table of integrals, series, and
  products}, eighth ed., Elsevier/Academic Press, Amsterdam, 2015, Translated
  from the Russian, Translation edited and with a preface by Daniel Zwillinger
  and Victor Moll, Revised from the seventh edition [MR2360010]. \MR{3307944}

\bibitem{JS}
Herv\'e Jacquet and Joseph~A. Shalika, \emph{A non-vanishing theorem for zeta
  functions of {${\rm GL}_{n}$}}, Invent. Math. \textbf{38} (1976/77), no.~1,
  1--16. \MR{0432596}

\bibitem{Kim}
Henry~H. Kim, \emph{Functoriality for the exterior square of {${\rm GL}_4$} and
  the symmetric fourth of {${\rm GL}_2$}}, J. Amer. Math. Soc. \textbf{16}
  (2003), no.~1, 139--183, With appendix 1 by Dinakar Ramakrishnan and appendix
  2 by Kim and Peter Sarnak. \MR{1937203}

\bibitem{LY}
Jianya Liu and Yangbo Ye, \emph{Perron's formula and the prime number theorem
  for automorphic {$L$}-functions}, Pure Appl. Math. Q. \textbf{3} (2007),
  no.~2, Special Issue: In honor of Leon Simon. Part 1, 481--497. \MR{2340051}

\end{thebibliography}
\end{document}